\newtheorem{thm}{Theorem}[section]
\newtheorem{prop}[thm]{Proposition}
\newtheorem{lem}[thm]{Lemma}
\newtheorem{cor}[thm]{Corollary}
\newtheorem{ex}[thm]{Example}
\newtheorem{pb}[thm]{Problem}
\newtheorem{conj}[thm]{Conjecture}
\newtheorem{remark}[thm]{Remark}
\begin{document}

\setlength{\baselineskip}{20pt}
\underline{}\begin{center}{\Large \bf Some tight bounds on the minimum and maximum forcing numbers of graphs}\footnote{This work is supported by NSFC\,(Grant No. 11871256).}

\vspace{4mm}

{Qianqian Liu, Heping Zhang \footnote{The corresponding author.}
\renewcommand\thefootnote{}\footnote{E-mail addresses: liuqq2016@lzu.edu.cn(Q.Liu), zhanghp@lzu.edu.cn(H.Zhang).}}

\vspace{2mm}

\footnotesize{ School of Mathematics and Statistics, Lanzhou University, Lanzhou, Gansu 730000, P. R. China}
\end{center}
\noindent {\bf Abstract}: Let $G$ be a simple graph with $2n$ vertices and a perfect matching.
We denote by $f(G)$ and $F(G)$ the minimum and maximum forcing number of $G$, respectively.

Hetyei obtained that the maximum number of edges of graphs $G$ with a unique perfect matching is $n^2$. We know that $G$ has a unique
perfect matching if and only if $f(G)=0$.
Along this line, we generalize the classical result to all graphs $G$ with $f(G)=k$ for $0\leq k\leq n-1$,
and characterize corresponding extremal graphs as well. Hence we get a non-trivial lower bound of $f(G)$ in terms of the order and size. For bipartite graphs, we gain corresponding stronger results. Further, we obtain a new upper bound of $F(G)$.
For bipartite graphs $G$, Che and Chen (2013) obtained that $f(G)=n-1$ if and only if $G$ is complete bipartite graph $K_{n,n}$. We completely characterize  all bipartite graphs $G$ with $f(G)= n-2$.

\vspace{2mm} \noindent{\it Keywords}: Perfect matching; Minimum forcing number; Maximum forcing number; Bipartite graph
\vspace{2mm}


 {\setcounter{section}{0}
\section{\normalsize Introduction}\setcounter{equation}{0}
 We consider only finite and simple graphs. Let $G$ be a graph with vertex set $V(G)$ and edge set $E(G)$. The \emph{order} of $G$ is the number of vertices in  $G$, and the \emph{size} of $G$, written $e(G)$, is the number of edges in $G$.

\emph{A perfect matching} $M$ of a graph $G$ is a set of disjoint edges covering all vertices of $G$.
A subset $S\subseteq M$ is called a \emph{forcing set} of $M$ if $S$ is not contained in any other perfect matching of $G$. The smallest cardinality of a forcing set of $M$ is called the \emph{forcing number} of $M$, denoted by $f(G,M)$. The concept was originally introduced by Harary et al. \cite{2} and by Klein and Randi\'{c} \cite{3}, which plays an important role in resonance theory.

For a perfect matching $M$ of $G$,
a cycle of $G$ is \emph{M-alternating} if its edges appear alternately in $M$ and $E(G)\setminus M$. Clearly, $M$ is a unique perfect matching of $G$ if and only if
$G$ contains no $M$-alternating cycles.
\begin{lem}\cite{6}\label{l1} Let $G$ be a graph with a perfect matching $M$. Then $S\subseteq M$ is a forcing set of $M$ if and only if $S$ contains at least one edge of every $M$-alternating cycle.
\end{lem}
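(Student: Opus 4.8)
The plan is to prove both implications by contraposition, and in each case the only tool needed is the symmetric difference operation: that of $M$ with the edge set of an alternating cycle in one direction, and that of $M$ with a second perfect matching in the other.

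For the ``only if'' direction I would assume that some $M$-alternating cycle $C$ satisfies $S\cap E(C)=\emptyset$ and then produce a perfect matching $M'\ne M$ that still contains $S$. The natural candidate is $M'=M\,\triangle\,E(C)$. Since $C$ is $M$-alternating, exactly half its edges lie in $M$, and deleting those while adding the complementary edges of $C$ again covers $V(C)$ by disjoint edges; hence $M'$ is a perfect matching, and $M'\ne M$ because $E(C)\setminus M\ne\emptyset$. As $S\subseteq M\setminus E(C)\subseteq M'$, the set $S$ sits inside a second perfect matching and so is not a forcing set of $M$.

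For the ``if'' direction I would assume $S$ is not a forcing set, so $S\subseteq M'$ for some perfect matching $M'\ne M$, and then exhibit an $M$-alternating cycle disjoint from $S$. Consider $M\,\triangle\,M'$: every vertex of $G$ meets exactly one edge of $M$ and one of $M'$, so in $M\,\triangle\,M'$ each vertex has degree $0$ or $2$; thus $M\,\triangle\,M'$ is a vertex-disjoint union of cycles, each alternating between $M$ and $M'$ and hence $M$-alternating. Since $M\ne M'$, at least one such cycle $C$ exists, and because $S\subseteq M\cap M'$ while $E(C)\subseteq M\,\triangle\,M'$ is disjoint from $M\cap M'$, we get $S\cap E(C)=\emptyset$, contradicting the hypothesis that $S$ meets every $M$-alternating cycle.

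The only mild subtlety — and the step I would check most carefully — is verifying in the first direction that flipping $M$ along a single $M$-alternating cycle really yields a \emph{perfect} matching and not merely a matching: one must observe that the edges of $M$ incident with $V(C)$ are precisely the $M$-edges of $C$, so that no vertex is left uncovered and no vertex is covered twice. Everything else follows directly from the degree count for symmetric differences of perfect matchings, so I do not expect a genuine obstacle here.
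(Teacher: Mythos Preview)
Your argument is correct and is precisely the standard proof of this folklore lemma: contrapositive in both directions, using the symmetric difference with an $M$-alternating cycle one way and with a second perfect matching the other. The only point to compare is that the paper does not actually prove this statement at all --- it is quoted as Lemma~\ref{l1} with a citation to Pachter and Kim \cite{6} and used as a black box throughout --- so there is no ``paper's proof'' to match against. Your write-up would serve perfectly well as the omitted justification.
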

Let $C(G,M)$ denote the maximum number of disjoint $M$-alternating cycles in $G$. Then $f(G,M)\geq C(G,M)$ by Lemma \ref{l1}.
For plane bipartite graphs, Pachter and Kim pointed out the following minimax theorem.
\begin{thm}\cite{6}\label{thm6} Let $G$ be a plane bipartite graph. Then $f(G, M)=C(G,M)$ for any perfect matching $M$ of $G$.
\end{thm}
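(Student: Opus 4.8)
Here is how I would approach proving Theorem~\ref{thm6}.

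The inequality $f(G,M)\ge C(G,M)$ is already recorded in the excerpt as a direct consequence of Lemma~\ref{l1}: if $\mathcal C$ is a collection of $C(G,M)$ pairwise edge-disjoint $M$-alternating cycles, then every forcing set of $M$ must contain an edge of each member of $\mathcal C$, and those edges are all distinct, so every forcing set has at least $C(G,M)$ edges. Thus the whole content of the theorem is the reverse inequality $f(G,M)\le C(G,M)$: one must produce a forcing set of $M$ hitting every $M$-alternating cycle but using only $C(G,M)$ edges.

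The plan is to convert this into a packing--covering statement for directed cycles in a planar digraph. Fix a plane embedding of $G$ and a bipartition $V(G)=X\cup Y$. Orient each edge of $M$ from $X$ to $Y$ and each edge of $E(G)\setminus M$ from $Y$ to $X$; call the resulting planar digraph $\vec G$. Then at a vertex of $X$ the only outgoing arc is its $M$-edge, while at a vertex of $Y$ every outgoing arc is a non-$M$-edge, so a set of edges forms a directed cycle of $\vec G$ precisely when it is an $M$-alternating cycle of $G$. Hence $C(G,M)=\nu(\vec G)$, the maximum number of arc-disjoint directed cycles of $\vec G$. Moreover, by Lemma~\ref{l1} a set $S\subseteq M$ is a forcing set of $M$ exactly when $S$ meets every directed cycle of $\vec G$; and any set $T$ of arcs meeting every directed cycle can be converted into one inside $M$ of no larger size by replacing each non-$M$ arc $(y,x)\in T$ (with $y\in Y$, $x\in X$) by the unique arc leaving $x$, namely its $M$-edge, since a directed cycle through $(y,x)$ must continue along that $M$-edge. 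Therefore $f(G,M)=\tau(\vec G)$, the minimum number of arcs of $\vec G$ meeting every directed cycle.

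After this reduction the theorem says exactly $\tau(\vec G)=\nu(\vec G)$ for the planar digraph $\vec G$, i.e.\ that in a planar digraph a minimum feedback arc set and a maximum packing of arc-disjoint directed cycles have equal size. This is the crux and I expect it to be the main obstacle; I would obtain it from planar duality combined with the Lucchesi--Younger theorem. Passing to the plane dual $\vec G^{*}$ (rotating each arc a quarter turn), directed cycles of $\vec G$ correspond to directed bonds (minimal directed cuts) of $\vec G^{*}$, and arc sets meeting all directed cycles of $\vec G$ correspond to dijoins of $\vec G^{*}$; the Lucchesi--Younger min-max equality (minimum size of a dijoin equals the maximum number of arc-disjoint directed cuts) then translates back to $\tau(\vec G)=\nu(\vec G)$. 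A little care is needed with degenerate features of the embedding---bridges of $\vec G$, which lie on no directed cycle and can be deleted, and cut vertices or disconnectedness, which one handles componentwise (allowing loops and parallel arcs in $\vec G^{*}$)---but none of these affect the equality.

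If a self-contained argument avoiding Lucchesi--Younger is wanted, I would instead try induction on the number of faces of $G$ enclosed by $M$-alternating cycles, using that an innermost $M$-alternating cycle $C$ (one enclosing fewest faces) has no $M$-alternating cycle in its interior: delete a suitable piece around an $M$-edge $e$ of $C$, apply the induction hypothesis to the smaller graph with the induced perfect matching, and add $e$ back. The delicate points on that route are pinning down exactly what to delete so that $C(G,M)$ drops by precisely one, and controlling how the $M$-alternating cycles of the smaller graph relate to those of $G$.
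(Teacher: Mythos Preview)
The paper does not prove this theorem; it merely cites it from Pachter and Kim~\cite{6} and uses it as a black box (e.g.\ in the proof of Proposition~\ref{p4}). So there is no ``paper's own proof'' to compare against.

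Your argument is correct and is in fact the route taken in~\cite{6}: orient the edges so that $M$-alternating cycles become exactly the directed cycles of a planar digraph $\vec G$, observe that a minimum forcing set of $M$ coincides with a minimum feedback arc set of $\vec G$ (your replacement trick showing one may restrict to $M$-arcs is sound, since every vertex of $X$ has a unique out-arc), and then invoke the Lucchesi--Younger theorem through planar duality to get $\tau(\vec G)=\nu(\vec G)$. Two small remarks worth making explicit in a write-up: (i) for $M$-alternating cycles, edge-disjoint and vertex-disjoint coincide (any shared vertex forces the shared $M$-edge), so your identification $C(G,M)=\nu(\vec G)$ is unambiguous whichever reading of ``disjoint'' one adopts; (ii) the dual-graph step deserves a sentence of care about connectedness, but as you note this is handled componentwise and does not affect the equality.

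Your alternative inductive sketch (peel off an innermost $M$-alternating face) is a reasonable idea, but carrying it out cleanly is genuinely subtle for the reason you identify: controlling how deleting near an $M$-edge of the innermost cycle changes the family of $M$-alternating cycles globally. The Lucchesi--Younger route is the clean one and is what the literature does.
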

For a  vertex subset $T$ of $G$, we write $G-T$ for the subgraph of $G$ obtained by deleting all vertices in $T$ and their incident edges. Sometimes, we write $G[V(G)\setminus T]$ for the subgraph $G-T$, \emph{induced by} $V(G)\setminus T$. If $T=\{v\}$, we write $G-v$ rather than $G-\{v\}$.


Let $G$ and $H$ be bipartite graphs. We say $G$ \emph{contains} $H$ if $G$ has a subgraph $L$ such that $G-V(L)$ has a perfect matching
and $L$ is isomorphic to an even subdivision of $H$. In \cite{LL1} and some articles related to matching theory, $G$ \emph{contains} $H$ is also called $H$ is a \emph{conformal minor} of $G$.
Guenin and Thomas obtained the following general minimax result in
somewhat different manner (see Corollary 5.8 in \cite{54}).

\begin{thm}\cite{54} Let $G$ be a bipartite graph with a perfect matching $M$. Then $G$ has no $K_{3,3}$ or the Heawood graph as a conformal minor if and only if
$f(G', M')=C(G', M')$ for each subgraph $G'$ of $G$ such that $M'=M\cap E(G')$ is a perfect matching in $G'$.
\end{thm}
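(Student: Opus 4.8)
The plan is to recast the statement as a min--max identity for directed graphs via the classical matching--contraction correspondence, and then to derive it from the structure theory for packing directed circuits. First I would fix a bipartition $(A,B)$ of $G$ and write $M=\{a_1b_1,\dots,a_nb_n\}$ with $a_i\in A$, $b_i\in B$, and form the digraph $D=D(G,M)$ on vertex set $\{1,\dots,n\}$ having an arc $j\to i$ for each non-matching edge $a_jb_i$ of $G$. Tracing the alternation $a_{i_1}b_{i_1}a_{i_2}b_{i_2}\cdots$ shows that $M$-alternating cycles of $G$ correspond bijectively to directed cycles of $D$, vertex-disjoint families to vertex-disjoint families, and, by Lemma~\ref{l1}, forcing sets $S\subseteq M$ of $M$ to sets of vertices of $D$ meeting every directed cycle. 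Hence
\[
 f(G,M)=\tau(D),\qquad C(G,M)=\nu(D),
\]
where $\tau$ denotes the minimum size of a feedback vertex set and $\nu$ the maximum number of vertex-disjoint directed cycles. Moreover the subgraphs $G'$ with $M'=M\cap E(G')$ a perfect matching are exactly those obtained by choosing a subset of $M$, keeping its endpoints, and deleting any set of non-matching edges; these are precisely the subdigraphs of $D$ (delete any vertices, then any arcs), and conversely every subdigraph arises this way. So the right-hand side of the theorem is equivalent to: $\tau(D')=\nu(D')$ for every subdigraph $D'$ of $D$.

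Next I would check that the left-hand side translates, under $G\mapsto D$, into the absence of a fixed family of non-packing digraphs---the odd double circuits---as minors of $D$. Subdividing an edge of $K_{3,3}$ or of the Heawood graph an even number of times inside $G$ corresponds to subdividing a vertex or an arc of $D$, which is the inverse of the contractions used in passing to a (butterfly) minor, while deleting the remaining conformal part of $G$ corresponds to deleting vertices of $D$. Pinning this down, $K_{3,3}$ with its unique perfect matching maps to the odd double circuit $F_3$ (the complete digraph on three vertices), the Heawood graph with its canonical matching maps to a $7$-vertex digraph that, like $F_7$, fails to pack, and in the bipartite setting every other odd double circuit contains one of these two as a minor; thus ``$G$ contains $K_{3,3}$ or the Heawood graph as a conformal minor'' is equivalent to ``$D$ has an odd double circuit as a minor''.

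After these two reductions the assertion becomes exactly the Guenin--Thomas ``packing directed circuits exactly'' characterisation: $\tau(D')=\nu(D')$ for every subdigraph $D'$ of $D$ if and only if $D$ has no odd double circuit as a minor. The routine direction is the forward one---each odd double circuit $F_k$ has $\tau(F_k)=\nu(F_k)+1$, and realising such a minor inside $D$ by deletions and butterfly contractions yields a subdigraph of $D$ with $\tau>\nu$. The converse is the real content, and it is proved by induction on $|V(D)|+|E(D)|$: one first disposes of digraphs that are not strongly connected or that admit a cut vertex by splitting along it, applying the inductive hypothesis to the pieces and recombining packings and feedback sets, which reduces everything to the strongly $2$-connected case; there one invokes the structural description of such digraphs together with the excluded-minor hypothesis to produce either a single vertex lying on every directed cycle (so $\tau=\nu=1$) or a directed cycle whose deletion lowers $\nu$ by exactly one while still excluding the obstruction, and then concludes by induction. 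I expect the converse to be the one genuine obstacle: $\tau$ and $\nu$ differ for general digraphs, so converting an excluded-minor hypothesis into an \emph{exact} identity rather than a bounded ratio requires the full directed-graph structure theory and admits no short proof---which is precisely why the result is quoted here rather than reproved.
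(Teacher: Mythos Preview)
The paper does not prove this theorem at all: it is stated with the citation \cite{54} and the preceding sentence explicitly attributes it to Guenin and Thomas (``see Corollary~5.8 in~\cite{54}''), so there is no ``paper's own proof'' to compare against. Your sketch is therefore not competing with anything in the paper; rather, you have outlined how the cited result is obtained, and you correctly recognise in your final sentence that the theorem is quoted here precisely because its proof is long and external.

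That said, a brief comment on the sketch itself. The translation $G\leftrightarrow D$ via contracting the matching is exactly right, and so is the identification $f(G,M)=\tau(D)$, $C(G,M)=\nu(D)$, and the correspondence between conformal subgraphs $G'$ and subdigraphs $D'$. Where your account becomes slightly imprecise is in the list of obstructions: in the Guenin--Thomas theorem the excluded minors are the odd double circuits together with a sporadic seven-vertex digraph $F_7$ (the latter arising from the Heawood graph), and $F_7$ is \emph{not} itself an odd double circuit, so your phrase ``every other odd double circuit contains one of these two as a minor'' is not quite how the reduction works. The actual point is that, under the $G\leftrightarrow D$ dictionary, having $K_{3,3}$ as a conformal minor of $G$ is equivalent to $D$ having \emph{some} odd double circuit as a minor (not just $F_3$), while the Heawood graph corresponds to $F_7$; establishing this equivalence carefully is part of what Guenin and Thomas do. This is a detail of bookkeeping rather than a genuine gap, and your overall plan---reduce to digraphs, identify the excluded minors, and invoke the structural induction on strongly $2$-connected pieces---matches the architecture of \cite{54}.
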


The \emph{minimum} and \emph{maximum forcing number} of $G$ are the minimum and maximum values of $f(G,M)$ over all perfect matchings $M$ of $G$, denoted by $f(G)$ and $F(G)$, respectively. The \emph{degree} of a vertex $v$ in $G$, written $d_G(v)$, is the number of edges incident to $v$. A \emph{pendant vertex} of $G$ is a vertex of degree 1. We denote by $\delta(G)$ and $\Delta(G)$ the minimum and maximum degrees of the vertices of $G$. The problem of finding the minimum forcing number of bipartite graphs with the maximum degree 4 is NP-complete \cite{5}.

The path and cycle with $n$ vertices are denoted by $P_n$ and $C_n$, respectively. The \emph{cartesian product} of graphs $G$ and $H$, written $G\times H$.
Pachter and Kim \cite{6} showed that $f(P_{2n}\times P_{2n})=n$ and $F(P_{2n}\times P_{2n})=n^2$. Riddle \cite{7} got that $f(C_{2m}\times C_{2n})=2\text{min}\{m,n\}$, and
Kleinerman \cite{16} obtained that $F(C_{2m}\times C_{2n})=mn$. Afshani et al. \cite{5} obtained that
$F(P_{2k}\times C_{2n})=kn$ and $F(P_{2k+1}\times C_{2n})=kn+1$, and they \cite{5} proposed a problem: what is the maximum forcing number of non-bipartite graph $P_{2m} \times C_{2n+1}$? Jiang and Zhang \cite{29} solved the problem and obtained that $F(P_{2m}\times C_{2n+1})=m(n+1)$.
For any $k$-regular bipartite graph $G$ with $n$ vertices in each partite set, Adams et al. \cite{4} showed that $F(G)\geq (1-\frac{log~(2e)}{log~k})n$, where $e$ is the base of the natural logarithm. Hence, for hypercube $Q_k$ where $k\geq 2$, $F(Q_k)>c2^{k-1}$ for any constant $0<c<1$ and sufficient large $k$ (see \cite{7}). Diwan \cite{25} proved that $f(Q_k)=2^{k-2}$ by linear algebra for $k\geq 2$, which solved a conjecture proposed by Pachter and Kim \cite{6}.
For hexagonal systems, Xu et al. \cite{27} proved that the maximum forcing number is equal to its resonant number. For polyomino graphs \cite{42,LW17} and BN-fullerene graphs \cite{40}, the same result also holds.
For more researches on the minimum and maximum forcing numbers, see \cite{1,29,56,62}.

For graphs with a unique perfect matching, there are some classical  results. To describe these results, we define a bipartite graph $H_{n,k}$ of order $2n$ as follows, where $n$ and $k$ are integers with $0\leq k\leq n-1$:
The bipartition of $H_{n,k}$ is $U\cup V$, where $U=\{u_1,u_2,\dots,u_n\}$ and $V=\{v_1,v_2,\dots,v_n\}$, such that $u_iv_j\notin E(H_{n,k})$ if and only if $1\leq i <j\leq n-k$ (see $H_{6,2}$ in Fig. \ref{fe4}). It is clear that $$d_{H_{n,k}}(u_l)=k+l \text{ and } d_{H_{n,k}}(v_l)=n-l+1 \text{ for } l=1,2,\dots,n-k$$ and the other vertices have degree $n$. So $H_{n,n-1}$ is isomorphic to $K_{n,n}$, which is the complete bipartite graph with each partite set having $n$ vertices.

Let $\hat{H}_{n,0}$ be the graph obtained by adding all possible edges in $V$ to $H_{n,0}$ (see $\hat{H}_{5,0}$ in Fig. \ref{fe4}).
Obviously, $\{u_iv_i|i=1,2,\dots,n\}$ is the unique perfect matching of $H_{n,0}$ and $\hat{H}_{n,0}$.
\begin{figure}[h]
\centering
\includegraphics[height=3.2cm,width=16cm]{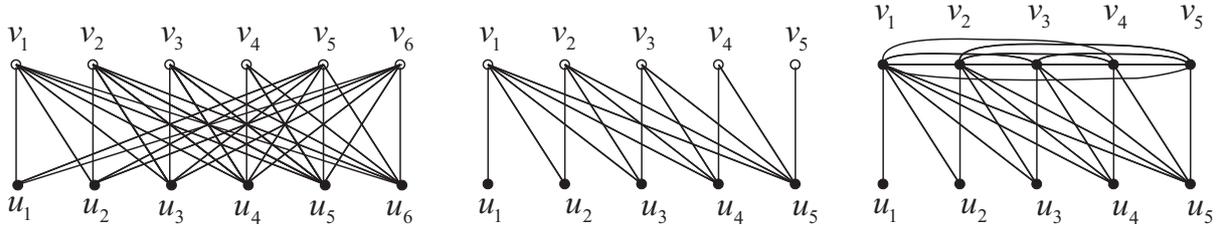}
\caption{\label{fe4} The graphs $H_{6,2}$, $H_{5,0}$ and $\hat{H}_{5,0}$.}
\end{figure}
A graph is \emph{split} if its vertex set can be partitioned into a clique and an independent set. Since $U$ is an independent set and $V$ is a clique of $\hat{H}_{n,0}$, $\hat{H}_{n,0}$ is a split graph. A graph is called a \emph{cograph} if it is either a singleton or it can be obtained by the disjoint union or join of two cographs, where the \emph{join} of two graphs $G$ and $H$, written $G\vee H$, is formed by taking the disjoint union of these two graphs and additionally adding the edges $\{xy|x\in V(G),y\in V(H)\}$.

For (bipartite) graphs with a unique perfect matching, there are some classical results (see Lemma 4.3.2 in \cite{14} for bipartite graphs, and Corollary 1.6 in \cite{46} or Corollary 5.3.14 in \cite{14} for general graphs).
\begin{thm}\cite{14}\label{l3} Let $G$ be a bipartite graph of order $2n$ and with a unique perfect matching. Then $G$ has two pendant vertices lying in different partite sets and $e(G)\leq\frac{n(n+1)}{2}$. Moreover, equality holds if and only if $G$ is $H_{n,0}$.
\end{thm}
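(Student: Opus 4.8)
Write $M=\{u_1v_1,\dots,u_nv_n\}$ for the unique perfect matching of $G$, with partite sets $U=\{u_1,\dots,u_n\}$ and $V=\{v_1,\dots,v_n\}$. The plan is to encode $M$ by an auxiliary digraph and read off all three assertions from its acyclicity. Let $D$ be the digraph on $\{1,\dots,n\}$ with an arc $i\to j$ exactly when $u_iv_j\in E(G)\setminus M$; since $G$ is simple and $u_iv_i\in M$, this $D$ is a loopless simple digraph. The key step is a bijection between $M$-alternating cycles of $G$ and directed cycles of $D$: an $M$-alternating cycle has even length with edges strictly alternating between $M$ and $E(G)\setminus M$, so starting along a matching edge it reads $u_{a_1}v_{a_1}u_{a_2}v_{a_2}\cdots u_{a_\ell}v_{a_\ell}$ and then back to $u_{a_1}$, where each $u_{a_{t+1}}v_{a_t}$ and also $u_{a_1}v_{a_\ell}$ lies in $E(G)\setminus M$; this is precisely the directed cycle $a_1\to a_\ell\to\cdots\to a_2\to a_1$ of $D$, and the construction reverses. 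By the remark in the Introduction that $M$ is unique iff $G$ has no $M$-alternating cycle, we conclude that $D$ is acyclic.

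Now for the three claims. For $n=1$ we have $G=K_2$ and everything is trivial, so assume $n\ge 2$; then the nonempty acyclic digraph $D$ has a source $s$ (in-degree $0$) and a sink $t$ (out-degree $0$). Since the out-degree of $i$ in $D$ equals $d_G(u_i)-1$ and its in-degree equals $d_G(v_i)-1$, we get $d_G(u_t)=1$ and $d_G(v_s)=1$, i.e.\ $u_t\in U$ and $v_s\in V$ are two pendant vertices lying in different partite sets. For the size, $e(G)=n+|E(D)|$; an acyclic digraph on $n$ vertices cannot contain both arcs $i\to j$ and $j\to i$, so it carries at most one arc per unordered pair and hence $|E(D)|\le\binom{n}{2}$, giving $e(G)\le n+\binom{n}{2}=\frac{n(n+1)}{2}$. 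Equality forces $|E(D)|=\binom{n}{2}$, so $D$ is a tournament; being also acyclic it is the transitive tournament, i.e.\ after relabelling, $i\to j$ iff $i>j$, equivalently $u_iv_j\in E(G)$ iff $i\ge j$, equivalently $u_iv_j\notin E(G)$ iff $i<j$. This is exactly the defining condition of $H_{n,0}$, so $G\cong H_{n,0}$; conversely $H_{n,0}$ has the unique perfect matching $\{u_iv_i\}$ and $\sum_{l=1}^{n}(n-l+1)=\frac{n(n+1)}{2}$ edges, so equality does hold there.

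The argument has no real bottleneck; the only points demanding care are the precise formulation of the $M$-alternating-cycle$\leftrightarrow$directed-cycle correspondence in the bipartite setting (watching the orientation convention and the strict alternation forced by bipartiteness) and the elementary fact that a DAG on $n$ vertices attains $\binom{n}{2}$ arcs exactly when it is the transitive tournament. I would emphasize that no connectivity assumption on $G$ is needed and the argument is unaffected if $G$ is disconnected. An alternative route is induction on $n$ by deleting a pendant vertex together with its neighbour (using that the residual graph again has a unique perfect matching), but propagating the extremal case $G\cong H_{n,0}$ through the induction is somewhat more cumbersome than the digraph argument above.
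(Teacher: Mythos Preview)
The paper does not supply its own proof of this statement: Theorem~\ref{l3} is quoted from \cite{14} as a classical result and is used throughout as a black box. So there is no in-paper argument to compare against.

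Your proof is correct. The digraph encoding $i\to j\Leftrightarrow u_iv_j\in E(G)\setminus M$ is the standard device here, and you have set up the correspondence between $M$-alternating cycles and directed cycles accurately (including the orientation reversal). The three conclusions then drop out cleanly: a source and sink of the DAG give the two pendant vertices in opposite colour classes; the bound $|E(D)|\le\binom{n}{2}$ yields $e(G)\le\frac{n(n+1)}{2}$; and equality forces an acyclic tournament, hence a transitive tournament, which after relabelling is exactly $H_{n,0}$ as defined in the paper. One small remark: when you say ``the nonempty acyclic digraph $D$ has a source $s$ and a sink $t$'', it may happen that $s=t$ (e.g.\ if some index is isolated in $D$), but this is harmless since $u_t$ and $v_s$ are still two distinct pendant vertices in different partite sets. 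Your closing comment about the inductive alternative (peel off a pendant edge) is apt; that is in fact closer to how the result is usually presented in \cite{14}, and as you note the extremal characterisation is tidier via the digraph.
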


\begin{thm}\label{cl4} \cite{46,14} Let $G$ be a graph of order $2n$ and with a unique perfect matching. Then $e(G)\leq n^2$, and equality holds if and only if $G$ is $\hat{H}_{n,0}$.
\end{thm}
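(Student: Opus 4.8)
The plan is to argue by induction on $n$, combining a classical structural theorem of Kotzig with a short edge count. The base case $n=1$ is immediate: the only graph on two vertices with a perfect matching is $K_2=\hat{H}_{1,0}$, which has $1=1^2$ edges. I would first dispose of the disconnected case. If $G$ has components $G_1,\dots,G_t$ with $t\ge 2$, then each $G_i$ inherits a unique perfect matching and has even order $2n_i\ge 2$, so $e(G)=\sum_i e(G_i)\le\sum_i n_i^2<\bigl(\sum_i n_i\bigr)^2=n^2$; in particular a disconnected graph can never be extremal, and $\hat{H}_{n,0}$ is connected since $v_1$ is a universal vertex. Hence from now on $G$ may be assumed connected.

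For the inductive step I would invoke Kotzig's theorem that a connected graph with a unique perfect matching $M$ contains an edge $uv\in M$ that is a bridge (see, e.g., \cite{14}). Deleting $uv$ splits $G$ into components $A\ni u$ and $B\ni v$, both of odd order; write $|A|=2a+1$ and $|B|=2b+1$, so $a+b=n-1$. Each of $A-u$ and $B-v$ carries a unique perfect matching (a second one would combine with $uv$ and the other side to yield a second perfect matching of $G$) and has even order $2a$, respectively $2b$, so by induction $e(A-u)\le a^2$ and $e(B-v)\le b^2$. Since the only neighbour of $u$ outside $A$ is $v$, we get $d_A(u)\le|A|-1=2a$, hence $e(A)=e(A-u)+d_A(u)\le a^2+2a$, and symmetrically $e(B)\le b^2+2b$. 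Adding the bridge back,
\[ e(G)=e(A)+e(B)+1\le a^2+2a+b^2+2b+1=(a+b+1)^2-2ab=n^2-2ab\le n^2. \]

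It remains to treat equality. It forces $ab=0$, say $b=0$, so $B=\{v\}$, $v$ is a pendant vertex of $G$, and $a=n-1$; it also forces $e(A-u)=(n-1)^2$ and $d_A(u)=2(n-1)$. By the induction hypothesis $A-u\cong\hat{H}_{n-1,0}$, and $d_A(u)=2(n-1)$ means $u$ is adjacent to every vertex of $A-u$. Thus $G$ is obtained from $\hat{H}_{n-1,0}$ by adding a vertex $u$ joined to all of it and then attaching a pendant vertex $v$ at $u$. The final step is to recognise this graph as $\hat{H}_{n,0}$: in $\hat{H}_{n,0}$ the vertex $v_1$ is universal, $u_1$ is a pendant vertex whose only neighbour is $v_1$, and $\hat{H}_{n,0}-\{u_1,v_1\}\cong\hat{H}_{n-1,0}$ after decreasing all indices by one, which is exactly the description above under the identification $u\leftrightarrow v_1$, $v\leftrightarrow u_1$. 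Conversely $e(\hat{H}_{n,0})=\binom{n+1}{2}+\binom{n}{2}=n^2$ and $\hat{H}_{n,0}$ has a unique perfect matching, so equality genuinely occurs there.

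I expect the inequality itself to be routine once Kotzig's theorem is available; the delicate points are in the equality analysis. One must check that the extremal value $d_A(u)=2(n-1)$ — that is, $u$ adjacent to all of $A-u$ — is genuinely compatible with $G$ having a unique perfect matching; this follows because $v$ is a pendant vertex, so every cycle through $u$ avoids the bridge $uv$, and such a cycle meets $u$ in two non-matching edges and therefore has odd length, hence cannot be $M$-alternating. One must also verify carefully that ``$\hat{H}_{n-1,0}$ plus a universal vertex plus a pendant'' reassembles precisely into $\hat{H}_{n,0}$, rather than merely into some graph with the same number of edges.
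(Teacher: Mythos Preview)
The paper does not prove this statement at all: it is quoted as a classical result of Lov\'asz (see the citations \cite{46,14}) and used as a black box in the proofs of Theorems~\ref{p3} and~\ref{p6}. So there is no ``paper's own proof'' to compare against.

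Your argument is correct and is essentially the standard one. The induction via Kotzig's bridge theorem, the arithmetic $e(G)\le n^2-2ab$, and the equality analysis all go through. Two minor remarks: (i) in the equality case you tacitly use that the induction hypothesis applies to $A-u$ even though it may be disconnected, but you already set this up by proving the bound for disconnected graphs first; (ii) your closing paragraph about ``compatibility'' is really verifying the converse direction (that $\hat{H}_{n,0}$ has a unique perfect matching and attains $n^2$ edges), which the paper states without proof just before the theorem---your alternating-cycle argument for it is fine.
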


We assume that the graphs $G$ in question have $2n$ vertices and a perfect matching. Then $0\leq f(G)\leq F(G)\leq n-1$.
If we use the terminology of forcing number, then $G$ has a unique perfect matching if and only if $f(G)=0$.
Along this line, we generalize Theorems \ref{l3} and \ref{cl4} to all graphs $G$ with $f(G)=k$ for $0\leq k\leq n-1$ in Section 2.
 In detail, we show that $e(G)\leq n^2+2nk-k^2-k$ and characterize corresponding extremal graphs. In turn, we obtain that $f(G)\geq n-\frac{1}{2}-\sqrt{2n^2-n-e(G)+\frac{1}{4}}$. For bipartite graphs, both bounds can be improved to $\frac{(n-k)(n+k+1)}{2}+nk$ and $n-\frac{1}{2}-\sqrt{2n^2-2e(G)+\frac{1}{4}}$, respectively.
For some special graphs, we give another lower bound of $f(G)$ in terms of $\delta(G)$. Precisely, if $G$ is a bipartite graph then $f(G)\geq \delta(G)-1$, and if $G$ is a split graph or a cograph then $f(G)\geq \frac{\delta(G)-1}{2}$.
In Section 3, we consider all graphs $G$ with $F(G)=k$ for $0\leq k\leq n-1$ and get that $e(G)\geq \frac{n(n+1)}{n-k}-k-1$. As a result, we obtain a new upper bound of $F(G)$ and compare it with two known bounds derived from the maximum anti-forcing numbers. A bipartite graph $G$ has $f(G)=n-1$ if and only if $G$ is $K_{n,n}$. In Section 4, we determined all bipartite graphs $G$ with $f(G)= n-2$.
\section{\normalsize Some lower bounds of the minimum forcing number}
In this section, we generalize Theorems \ref{l3} and \ref{cl4} to all bipartite and general graphs $G$ of order $2n$ and with $f(G)=k$ for $0\leq k\leq n-1$, respectively. By these results, we obtain two non-trivial lower bounds of $f(G)$ with respect to the order and size. For some special classes of graphs $G$, we also give a lower bound of $f(G)$ by using $\delta(G)$.
For a subset $S$ of $E(G)$, we use $V(S)$ to denote the set of all end-vertices of edges in $S$.

\begin{thm}\label{p3}Let $G$ be a graph of order $2n$ and with $f(G)=k$ for $0\leq k\leq n-1$. Then
\begin{eqnarray}
e(G)\leq n^2+2nk-k^2-k,\label{e4}
\end{eqnarray}
and equality holds if and only if $G$ is $\hat{H}_{n-k,0}\vee K_{2k}$ where $K_{2k}$ denotes the complete graph of order $2k$.
\end{thm}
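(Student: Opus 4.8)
The plan is to reduce the whole statement to the case $k=0$, that is, to Theorem~\ref{cl4}, via one structural observation: \emph{if $M$ is a perfect matching of $G$ and $S\subseteq M$ is a forcing set of $M$, then $M\setminus S$ is the unique perfect matching of $G-V(S)$.} Indeed $M\setminus S$ is certainly a perfect matching of $G-V(S)$; and if $M''$ were another one, then $M''\cup S$ would be a perfect matching of $G$ containing $S$ and distinct from $M$, contradicting that $S$ forces $M$. Note that $G-V(S)$ has order $2(n-|S|)$.

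To prove \eqref{e4}, take a perfect matching $M$ with $f(G,M)=f(G)=k$ and a forcing set $S\subseteq M$ with $|S|=k$. By the observation and Theorem~\ref{cl4}, $e(G-V(S))\le(n-k)^2$. Partitioning $E(G)$ into the edges inside $G-V(S)$, the edges inside $V(S)$, and the edges between $V(S)$ and $V(G)\setminus V(S)$, and bounding the latter two by $\binom{2k}{2}$ and $2k\cdot 2(n-k)$ respectively, we obtain
\[ e(G)\le (n-k)^2+\binom{2k}{2}+4k(n-k)=n^2+2nk-k^2-k. \]
If equality holds here, all three estimates must be tight: $e(G-V(S))=(n-k)^2$, so $G-V(S)\cong\hat H_{n-k,0}$ by Theorem~\ref{cl4}; $G[V(S)]\cong K_{2k}$; and every vertex of $V(S)$ is joined to every vertex outside $V(S)$. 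Hence $G\cong\hat H_{n-k,0}\vee K_{2k}$.

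For the converse I must check that $G_0:=\hat H_{n-k,0}\vee K_{2k}$ indeed has $f(G_0)=k$ (the count $e(G_0)=n^2+2nk-k^2-k$ is immediate). Write $G_0=A\vee B$ with $A=\hat H_{n-k,0}$, whose unique perfect matching I call $M_A$, and $B=K_{2k}$; fix a perfect matching $M_B$ of $B$ and set $M=M_A\cup M_B$. For $f(G_0)\le k$: any $M$-alternating cycle $C$ must meet $V(B)$, for otherwise $C$ would be an $M_A$-alternating cycle of $A$, of which there are none; and at each vertex of $V(B)$ on $C$ the matching edge of $C$ lies in $M_B$. So $M_B$ meets every $M$-alternating cycle, hence is a forcing set of $M$ by Lemma~\ref{l1}, giving $f(G_0,M)\le|M_B|=k$. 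For $f(G_0)\ge k$: let $M^*$ be any perfect matching of $G_0$ and $S^*$ any forcing set of it, with $s:=|S^*|\le n$. Running the bound of the second paragraph with $M^*$ and $S^*$ in place of $M$ and $S$ gives $e(G_0)\le n^2+2ns-s^2-s$; since $e(G_0)=n^2+2nk-k^2-k$, this reads $g(k)\le g(s)$ with $g(x):=-x^2+(2n-1)x$. Because $g(x+1)-g(x)=2(n-1-x)$, the function $g$ is strictly increasing on $\{0,1,\dots,n-1\}$ with $g(n)=g(n-1)$; as $k\le n-1$, we conclude $s\ge k$. Therefore $f(G_0,M^*)\ge k$ for every $M^*$, so $f(G_0)=k$.

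The one genuinely delicate step is the lower bound $f(G_0)\ge k$: it is not evident how to produce $k$ disjoint alternating cycles directly, and the device is instead to feed the already-proven inequality \eqref{e4} back through Theorem~\ref{cl4} and exploit the monotonicity of $g$ on $\{0,\dots,n-1\}$. Otherwise the argument is routine; one should only keep an eye on the boundary cases $k=0$ (where $S=\emptyset$ and the claim is exactly Theorem~\ref{cl4}) and $k=n-1$ (where $\hat H_{1,0}=K_2$ and $G_0=K_{2n}$), and on the value $s=n$, for which $g(n)=g(n-1)$ still yields $s\ge k$.
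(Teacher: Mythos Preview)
Your proof is correct and rests on the same reduction to Theorem~\ref{cl4} as the paper, but the logical packaging differs. The paper argues by contradiction: assuming $e(G)>n^2+2nk-k^2-k$, it shows that for \emph{every} perfect matching $M$ and \emph{every} $S\subseteq M$ with $|S|\ge n-k$, the induced subgraph $G[V(S)]$ has more than $|S|^2$ edges and hence (by Theorem~\ref{cl4}) at least two perfect matchings, so $M\setminus S$ cannot force $M$; this forces $f(G)\ge k+1$. You instead argue directly: pick one witness pair $(M,S)$ with $|S|=k$, note that $G-V(S)$ has a unique perfect matching, and bound the three edge classes. The two arguments are complementary views of the same $n-k$ matching edges, but yours avoids the contradiction scaffolding and is a little shorter. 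One place where your write-up is actually more complete than the paper's: you explicitly verify $f(\hat H_{n-k,0}\vee K_{2k})=k$, obtaining the lower bound by feeding the inequality \eqref{e4} back into itself via the monotonicity of $g(x)=-x^2+(2n-1)x$ on $\{0,\dots,n-1\}$; the paper computes only the edge count of the extremal graph and leaves the verification of the hypothesis $f(G)=k$ implicit.
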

\begin{proof}Suppose to the contrary that $e(G)\geq n^2+2nk-k^2-k+1$. Let $M$ be any perfect matching of $G$ and $S$ be any subset of $M$ with size no less than $n-k$. We are to prove that $M\setminus S$ is not a forcing set of $M$.
If we have done, then $f(G,M)\geq k+1$. By the arbitrariness of $M$, we acquire that $f(G)\geq k+1$, a contradiction.

Since $e(K_{2n})=2n^2-n$, we have
\begin{eqnarray}
n^2+2nk-k^2-k+1=e(K_{2n})-(n-k)(n-k-1)+1, \label{e8}
\end{eqnarray}
$$e(G[V(S)])\geq \binom{2|S|}{2}-[(n-k)(n-k-1)-1]=2|S|^2-|S|-(n-k)(n-k-1)+1.$$ So $e(G[V(S)])-(|S|^2+1)\geq |S|(|S|-1)-(n-k)(n-k-1)\geq 0$ for $x^2-x$ is monotonically increasing in $[\frac{1}{2},+\infty)$ and $|S|\geq n-k\geq 1$. Thus, $e(G[V(S)])\geq |S|^2+1$. By Theorem \ref{cl4}, $G[V(S)]$ has at least two perfect matchings. That is, $M\setminus S$ is not a forcing set of $M$.

Suppose that $G$ is the join of $\hat{H}_{n-k,0}$ and $K_{2k}$. By Theorem \ref{cl4}, $e(\hat{H}_{n-k,0})=(n-k)^2$. Since exactly two vertices in $\hat{H}_{n-k,0}$ may be not adjacent in $G$, we get that  $$e(G)=\binom{2n}{2}-[~\binom{2(n-k)}{2}-(n-k)^2~]=n^2+2nk-k^2-k.$$

Conversely, suppose that equality in (\ref{e4}) holds. Since $f(G)=k$, there exists a perfect matching $M$ of $G$ and a  minimum forcing set $S$ of $M$ such that $|S|=f(G,M)=f(G)$. By Lemma \ref{l1}, $G[V(M\setminus S)]$ contains no $M$-alternating cycles. Since (\ref{e8}) holds, we have $$e(G[V(M\setminus S)])\geq \binom{2(n-k)}{2}-(n-k)(n-k-1)=(n-k)^2.$$ By Theorem \ref{cl4}, $e(G[V(M\setminus S)])=(n-k)^2$ and $G[V(M\setminus S)]$ is  $\hat{H}_{n-k,0}$. Furthermore, each vertex in $V(S)$ is adjacent to all other vertices in $G$. So we have $G=\hat{H}_{n-k,0}\vee K_{2k}$.
\end{proof}

By inversing (\ref{e4}), we obtain a general lower bound on $f(G)$.
\begin{cor}\label{cp3} Let $G$ be a graph of order $2n$ and with a perfect matching. Then
\begin{eqnarray}
f(G)\geq n-\frac{1}{2}-\sqrt{2n^2-n-e(G)+\frac{1}{4}}, \label{e6}
\end{eqnarray}
and equality holds if and only if $G$ is $\hat{H}_{n-k,0} \vee K_{2k}$.
\end{cor}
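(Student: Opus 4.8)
The plan is to derive Corollary \ref{cp3} directly from Theorem \ref{p3} by solving the inequality (\ref{e4}) for $k$. First I would set $k=f(G)$, so that $G$ is a graph of order $2n$ with $f(G)=k$ for some $0\le k\le n-1$, and Theorem \ref{p3} immediately gives $e(G)\le n^2+2nk-k^2-k$. The idea is to view the right-hand side as a function of $k$ and invert it: rearranging $e(G)\le n^2+2nk-k^2-k$ yields $k^2-(2n-1)k+(e(G)-n^2)\le 0$, a quadratic inequality in $k$. Completing the square on the left, $\bigl(k-\tfrac{2n-1}{2}\bigr)^2\le \tfrac{(2n-1)^2}{4}-(e(G)-n^2)=2n^2-n-e(G)+\tfrac14$, so $\bigl|k-(n-\tfrac12)\bigr|\le\sqrt{2n^2-n-e(G)+\tfrac14}$. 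Since $k=f(G)\le n-1<n-\tfrac12$, we have $k-(n-\tfrac12)<0$, and the inequality forces $n-\tfrac12-k\le\sqrt{2n^2-n-e(G)+\tfrac14}$, i.e. $f(G)=k\ge n-\tfrac12-\sqrt{2n^2-n-e(G)+\tfrac14}$, which is exactly (\ref{e6}).

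For the equality characterization, I would trace through the above chain and note that each step is an equivalence except possibly the application of Theorem \ref{p3}. Equality in (\ref{e6}) is equivalent, via the square-root manipulation, to equality $e(G)=n^2+2nk-k^2-k$ with $k=f(G)$, which by the extremal part of Theorem \ref{p3} holds if and only if $G\cong\hat H_{n-k,0}\vee K_{2k}$. So the "if and only if" in the corollary carries over verbatim, with the understanding that $k$ here denotes $f(G)$; one should perhaps remark that conversely, for $G=\hat H_{n-k,0}\vee K_{2k}$ one has $f(G)=k$ (this is part of what Theorem \ref{p3} asserts, since such $G$ achieves equality in (\ref{e4}) and hence must have forcing number exactly $k$), so the quantity $n-\tfrac12-\sqrt{2n^2-n-e(G)+\tfrac14}$ indeed equals $k$ and the bound is attained.

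The only genuinely delicate point is a bookkeeping one rather than a mathematical obstacle: Theorem \ref{p3} is stated for a fixed $k$ with $0\le k\le n-1$, whereas in the corollary $k$ is not given in advance. The clean way to handle this is simply to apply Theorem \ref{p3} with the specific value $k:=f(G)$, which automatically lies in $[0,n-1]$ for any graph of order $2n$ with a perfect matching. I expect this to be essentially the entire proof — it is a one-paragraph deduction — so the "main obstacle" is really just making sure the sign of $k-(n-\tfrac12)$ is handled correctly when taking square roots, and stating the equality case in a way consistent with $k=f(G)$. No further graph-theoretic input beyond Theorem \ref{p3} is needed.
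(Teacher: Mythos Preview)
Your proposal is correct and follows essentially the same route as the paper: set $k=f(G)$, apply Theorem~\ref{p3} to get the quadratic inequality $k^2-(2n-1)k+e(G)-n^2\le 0$, and solve for $k$ to obtain (\ref{e6}); the equality case is then read off directly from the extremal characterization in Theorem~\ref{p3}. The only cosmetic difference is that you complete the square and invoke the sign of $k-(n-\tfrac12)$, whereas the paper writes out both roots of the quadratic and remarks that the larger root exceeds $n-1$ and is therefore vacuous---these are two ways of saying the same thing.
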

\begin{proof}Let $f(G)=k$. Then $0\leq k\leq n-1$. By Theorem \ref{p3}, $e(G)\leq n^2+2nk-k^2-k$. That is, $k^2-(2n-1)k-n^2+e(G)\leq 0$.
So \begin{eqnarray}
n-\frac{1}{2}-\sqrt{2n^2-n-e(G)+\frac{1}{4}}\leq k\leq n-\frac{1}{2}+\sqrt{2n^2-n-e(G)+\frac{1}{4}}. \label{e12}
\end{eqnarray}
Hence (\ref{e6}) holds.

Since $n-\frac{1}{2}+\sqrt{2n^2-n-e(G)+\frac{1}{4}}\geq n$ and $n-1$ is a trivial upper bound of $f(G)$, the second inequality in (\ref{e12}) holds. So equality in (\ref{e6}) holds if and only if equality in (\ref{e4}) holds.
Hence these graphs such that two equalities in (\ref{e4}) and (\ref{e6}) hold are the same.
\end{proof}
For bipartite graphs, we can obtain corresponding stronger results than Theorem \ref{p3} and Corollary \ref{cp3}.

\begin{thm}\label{p6}Let $G=(U,V)$ be a bipartite graph of order $2n$ and with $f(G)=k$ for $0\leq k\leq n-1$. Then
\begin{eqnarray}
e(G)\leq \frac{(n-k)(n+k+1)}{2}+nk,\label{e5}
\end{eqnarray}
and equality holds if and only if $G$ is $H_{n,k}$.
\end{thm}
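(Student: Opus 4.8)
The plan is to mirror the proof of Theorem \ref{p3}, replacing Theorem \ref{cl4} and $\hat{H}_{n,0}$ by their bipartite counterparts, Theorem \ref{l3} and $H_{n,0}$. The first step is a cosmetic rewriting: since $e(K_{n,n})=n^2$ and $\frac{(n-k)(n+k+1)}{2}+nk=n^2-\frac{(n-k)(n-k-1)}{2}$, the claimed bound is
$$e(G)\le n^2-\binom{n-k}{2}=e(K_{n,n})-\binom{n-k}{2},$$
which makes the ``missing edges'' count transparent.

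For the inequality I would argue by contradiction, assuming $e(G)\ge n^2-\binom{n-k}{2}+1$. Fix an arbitrary perfect matching $M$ and write $M=\{u_iv_i:1\le i\le n\}$ with $u_i\in U$, $v_i\in V$. Let $S\subseteq M$ be any subset with $|S|\ge n-k$; then $G[V(S)]$ is a bipartite graph of order $2|S|$ with perfect matching $S$, and every edge of $G$ not lying in $G[V(S)]$ has an endpoint in $V(M\setminus S)$, so at most $n^2-|S|^2$ edges of $G$ are absent from $G[V(S)]$. Hence
$$e(G[V(S)])\ \ge\ e(G)-\bigl(n^2-|S|^2\bigr)\ \ge\ |S|^2-\binom{n-k}{2}+1\ \ge\ \tfrac{|S|(|S|+1)}{2}+1,$$
the last step using $\binom{|S|}{2}\ge\binom{n-k}{2}$ (valid as $|S|\ge n-k\ge 1$). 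By Theorem \ref{l3} the graph $G[V(S)]$ cannot have a unique perfect matching, and since $S$ is one of its perfect matchings it contains an $S$-alternating cycle, which is an $M$-alternating cycle of $G$ disjoint from $M\setminus S$; by Lemma \ref{l1}, $M\setminus S$ is not a forcing set of $M$. Applying this to $S=M\setminus S'$ for each $S'\subseteq M$ with $|S'|\le k$, we see every forcing set of $M$ has size at least $k+1$, so $f(G,M)\ge k+1$; as $M$ was arbitrary, $f(G)\ge k+1$, contradicting $f(G)=k$.

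For the ``if'' part of the equality case I would check both $e(H_{n,k})=n^2-\binom{n-k}{2}$ (count the $\binom{n-k}{2}$ non-edges) and $f(H_{n,k})=k$. The bound $f(H_{n,k})\le k$ is immediate: for $M_0=\{u_iv_i:1\le i\le n\}$ and $S=\{u_jv_j:n-k+1\le j\le n\}$ one has $H_{n,k}-V(S)=H_{n-k,0}$, which has a unique perfect matching, so $S$ is a forcing set of $M_0$ by Lemma \ref{l1}. For the reverse, if $f(H_{n,k})=k'\le k$, the inequality just proved gives $e(H_{n,k})\le n^2-\binom{n-k'}{2}$, forcing $\binom{n-k'}{2}\le\binom{n-k}{2}$ and hence $k'=k$ by strict monotonicity of $\binom{\cdot}{2}$ on the positive integers. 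For the ``only if'' part, suppose $f(G)=k$ and equality holds, and choose $M$ with a forcing set $S$, $|S|=k$. By Lemma \ref{l1} there is no $M$-alternating cycle inside $V(M\setminus S)$, so $M\setminus S$ is the unique perfect matching of the bipartite graph $G[V(M\setminus S)]$ of order $2(n-k)$; by Theorem \ref{l3}, $e(G[V(M\setminus S)])\le\binom{n-k+1}{2}$, with equality only for $H_{n-k,0}$. Since at most $n^2-(n-k)^2$ edges of $G$ meet $V(S)$ and $\binom{n-k+1}{2}+n^2-(n-k)^2=n^2-\binom{n-k}{2}$, equality forces $G[V(M\setminus S)]\cong H_{n-k,0}$ and forces every pair $uv$ with $u\in U$, $v\in V$ meeting $V(S)$ to be an edge of $G$; thus $G$ is $H_{n-k,0}$ together with $k$ extra vertices added to each side, each joined to the whole opposite side, i.e. $G\cong H_{n,k}$.

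The step I expect to be the main obstacle is establishing $f(H_{n,k})=k$, and specifically the lower bound $f(H_{n,k})\ge k$: producing $k$ pairwise disjoint $M_0$-alternating cycles directly is awkward once $k>n/2$, so I would rely on the monotonicity argument above, being careful that it invokes only the already-proven inequality rather than the full theorem. The remaining work — the two edge-count identities and the final identification of the extremal graph with $H_{n,k}$ — is routine but requires attention to the bipartition structure.
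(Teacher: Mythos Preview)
Your proof is correct and mirrors the paper's approach exactly: contradiction via Theorem~\ref{l3} applied to $G[V(S)]$ for the inequality, and Theorem~\ref{l3} applied to $G[V(M\setminus S)]$ for the extremal characterization. You are in fact more thorough than the paper in explicitly verifying $f(H_{n,k})=k$ via an explicit forcing set together with the monotonicity argument; the paper's proof only computes $e(H_{n,k})$ and leaves the equality $f(H_{n,k})=k$ implicit.
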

\begin{proof}Suppose to the contrary that $e(G)\geq \frac{(n-k)(n+k+1)}{2}+nk+1$.
Let $M$ and $S$ be defined as that in the proof of Theorem \ref{p3}. By the same arguments, we will prove that $M\setminus S$ is not a forcing set of $M$.
Since $e(K_{n,n})=n^2$ and
\begin{eqnarray}
\frac{(n-k)(n+k+1)}{2}+nk+1=e(K_{n,n})-\frac{(n-k)(n-k-1)}{2}+1, \label{e9}
\end{eqnarray} we have $e(G[V(S)])\geq |S|^2-[\frac{(n-k)(n-k-1)}{2}-1]$. So $$e(G[V(S)])-[\frac{|S|(|S|+1)}{2}+1]\geq \frac{1}{2}[|S|^2-|S|-(n-k)^2+n-k]\geq 0$$ for $x^2-x$ is strictly monotonic  increasing in $[\frac{1}{2},+\infty)$ and $|S|\geq n-k\geq 1$. Therefore, $e(G[V(S)])\geq \frac{|S|(|S|+1)}{2}+1$. By Theorem \ref{l3}, $G[V(S)]$ has at least two perfect matchings. That is,  $M\setminus S$ is not a forcing set of $M$.

Suppose that  $G$ is $H_{n,k}$. Let $G'=G[\{u_i,v_i|i=1,2,\dots,n-k\}]$. Then $G'$ is isomorphic to $H_{n-k,0}$. By Theorem \ref{l3}, $e(G')=\frac{(n-k)(n-k+1)}{2}$. Since each vertex of $V(G)\setminus V(G')$ has vertex $n$, $u_iv_j\notin E(G)$ if and only if $1\leq i<j\leq n-k$ if and only if $u_iv_j\notin E(G')$. Thus,  $$e(G)=n^2-[(n-k)^2-e(G')]=\frac{(n-k)(n+k+1)}{2}+nk.$$

Conversely, suppose that equality in (\ref{e5}) holds. Since $f(G)=k$, there exists a perfect matching $M$ of $G$ and a minimum forcing set $S$ of $M$ such that $|S|=f(G,M)=f(G)$. By Lemma \ref{l1}, $G[V(M\setminus S)]$ has a unique perfect matching. Since (\ref{e9}) holds, we have $$e(G[V(M\setminus S)])\geq (n-k)^2-\frac{(n-k)(n-k-1)}{2}=\frac{(n-k)(n-k+1)}{2}.$$ By Theorem \ref{l3}, we obtain that $e(G[V(M\setminus S)])=\frac{(n-k)(n-k+1)}{2}$ and
$G[V(M\setminus S)]$ is $H_{n-k,0}$. Let $u_i$ and $v_j$ be two vertices of  $U\cap V(S)$ and $V\cap V(S)$, respectively. Then $u_i$ is adjacent to all vertices of $V$ and $v_j$ is adjacent to all vertices of $U$. So $G$ is $H_{n,k}$.
\end{proof}
By inversing (\ref{e5}), we obtain a lower bound on $f(G)$ for bipartite graphs.
\begin{cor}\label{cp6} Let $G$ be a bipartite graph of order $2n$ and with a perfect matching. Then
\begin{eqnarray}
f(G)\geq n-\frac{1}{2}-\sqrt{2n^2-2e(G)+\frac{1}{4}},\label{e7}
\end{eqnarray}
and  equality holds if and only if $G$ is $H_{n,k}$.
\end{cor}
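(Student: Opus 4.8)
The plan is to mirror, almost verbatim, the derivation of Corollary~\ref{cp3}, substituting the bipartite strengthening Theorem~\ref{p6} for Theorem~\ref{p3}. Write $k=f(G)$, so $0\le k\le n-1$. Theorem~\ref{p6} gives $e(G)\le\frac{(n-k)(n+k+1)}{2}+nk$. Expanding $(n-k)(n+k+1)=n^2+n-k^2-k$ and clearing the denominator turns this into the quadratic inequality
\[
k^2-(2n-1)k-n^2-n+2e(G)\le 0
\]
in the real variable $k$.

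Next I would solve this inequality in $k$. Its discriminant is $(2n-1)^2+4\bigl(n^2+n-2e(G)\bigr)=8n^2-8e(G)+1$, which is nonnegative since $e(G)\le n^2$ for any bipartite graph of order $2n$; hence the quadratic has the real roots $n-\frac12\pm\sqrt{2n^2-2e(G)+\frac14}$, and the inequality is equivalent to
\[
n-\frac12-\sqrt{2n^2-2e(G)+\tfrac14}\ \le\ k\ \le\ n-\frac12+\sqrt{2n^2-2e(G)+\tfrac14}.
\]
The left-hand inequality is precisely~(\ref{e7}).

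For the equality characterization I would argue, exactly as in Corollary~\ref{cp3}, that the right-hand inequality above is automatic and so imposes no restriction: from $e(G)\le n^2$ we get $\sqrt{2n^2-2e(G)+\frac14}\ge\frac12$, whence $n-\frac12+\sqrt{2n^2-2e(G)+\frac14}\ge n> n-1\ge f(G)$. Consequently equality in~(\ref{e7}) holds if and only if equality in~(\ref{e5}) holds, and by Theorem~\ref{p6} this occurs exactly when $G$ is $H_{n,k}$ (with $k=f(G)$).

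I do not anticipate a genuine obstacle; this is essentially an inversion of the bound in Theorem~\ref{p6}. The only point that needs a moment's attention is verifying $e(G)\le n^2$, which simultaneously makes the radicand nonnegative and lets the trivial bound $f(G)\le n-1$ discard the spurious upper root — but this is immediate for bipartite graphs on $2n$ vertices, since $G$ is a subgraph of $K_{n,n}$.
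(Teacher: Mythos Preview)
Your proposal is correct and follows essentially the same route as the paper: set $k=f(G)$, convert Theorem~\ref{p6} into the quadratic inequality $k^2-(2n-1)k-n^2-n+2e(G)\le 0$, extract the lower root to obtain~(\ref{e7}), and observe that the upper root exceeds the trivial bound $n-1$ so that equality in~(\ref{e7}) is equivalent to equality in~(\ref{e5}). The only difference is cosmetic: you explicitly invoke $e(G)\le n^2$ to justify that the radicand is nonnegative and that the upper root is at least $n$, whereas the paper leaves this implicit.
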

\begin{proof}Let $f(G)=k$. Then $0\leq k\leq n-1$. By Theorem \ref{p6}, $e(G)\leq \frac{(n-k)(n+k+1)}{2}+nk$. That is to say, $k^2-(2n-1)k-n^2-n+2e(G)\leq 0$.
So
\begin{eqnarray}
n-\frac{1}{2}-\sqrt{2n^2-2e(G)+\frac{1}{4}}\leq k\leq n-\frac{1}{2}+\sqrt{2n^2-2e(G)+\frac{1}{4}}.\label{e13}
\end{eqnarray}
Consequently, (\ref{e7}) holds.

Since $n-\frac{1}{2}+\sqrt{2n^2-2e(G)+\frac{1}{4}}\geq n$ and $n-1$ is a trivial upper bound of $f(G)$, the second inequality in (\ref{e13}) holds. So equality in (\ref{e7}) holds if and only if equality in (\ref{e5}) holds. Hence the graphs such that two equalities in (\ref{e5}) and (\ref{e7}) hold are the same.
\end{proof}
\begin{remark}\label{rm1}{\rm
The right sides in (\ref{e6}) and (\ref{e7})
are strictly monotonic increasing about $e(G)$. Hence the bounds in (\ref{e6}) and (\ref{e7}) are effective respectively for graphs $G$ with $e(G)\geq n^2$ and $e(G)\geq \frac{1}{2}(n^2+n)$.}
\end{remark}

In the sequel, we will give some lower bounds of $f(G)$ in terms of $\delta(G)$.

\begin{thm}\label{thm2}If $G$ is a bipartite graph with a perfect matching, then $f(G)\geq \delta(G)-1$. Moreover, the bound is tight.
\end{thm}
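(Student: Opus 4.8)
The strategy is to prove the stronger statement $f(G,M)\ge\delta(G)-1$ for \emph{every} perfect matching $M$ of $G$, and then take the minimum over $M$. So fix $M$ and let $S\subseteq M$ be a minimum forcing set, $|S|=f(G,M)$; put $M'=M\setminus S$ and $H=G[V(M')]$. The first step is to observe that $M'$ is a perfect matching of $H$ and indeed the unique one: any perfect matching $N$ of $H$ yields a perfect matching $N\cup S$ of $G$ containing $S$, so $N\cup S=M$ and $N=M'$. (One also notes $f(G,M)\le n-1$ always, since an $M$-alternating cycle has at least two $M$-edges and hence $M\setminus\{e\}$ is a forcing set for any single $e\in M$; thus $H$ has at least one edge and the next step applies.) Here it is used that $V(S)$ and $V(M')$ are disjoint because $M$ is a perfect matching, so that $M'$ really is the restriction of $M$ to $V(M')$.

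The second step is to apply Theorem \ref{l3} to the bipartite graph $H$, which has a unique perfect matching: $H$ has pendant vertices in both partite sets, so in particular there is a pendant vertex $w\in U$. Its only neighbour in $H$ is its $M'$-partner $v\in V$. The third step is a counting argument in $G$: every neighbour of $w$ lies in $V$, and among these exactly one, namely $v$, lies in $V(M')$; hence the remaining $d_G(w)-1\ge\delta(G)-1$ neighbours of $w$ all lie in $V\setminus V(M')=V\cap V(S)$. Since $S$ is a matching with exactly one end of each of its edges in $V$, we have $|V\cap V(S)|=|S|=f(G,M)$, and therefore $f(G,M)\ge d_G(w)-1\ge\delta(G)-1$. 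As $M$ was arbitrary, $f(G)\ge\delta(G)-1$.

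For tightness I would point to standard examples: for $n\ge 2$ the even cycle $C_{2n}$ has $\delta=2$ and $f(C_{2n})=1$ (for each perfect matching its only $M$-alternating cycle is $C_{2n}$ itself), while the complete bipartite graph $K_{n,n}$ has $\delta=n$ and $f(K_{n,n})=n-1$ by the result of Che and Chen quoted above; in both cases $f(G)=\delta(G)-1$.

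I do not expect a genuine obstacle. The only care needed is the bookkeeping in the first step — confirming that $M'$ is a perfect matching of the \emph{induced} subgraph $H$ and that it is unique, which rests on $V(S)$ and $V(M')$ being disjoint — and checking that the degenerate cases $f(G,M)\in\{0,n-1\}$ are not exceptions to the argument (when $f(G,M)=0$ the graph $G$ itself has a pendant vertex, forcing $\delta(G)\le 1$; when $f(G,M)=n-1$ the subgraph $H$ is a single edge, whose ends are pendant).
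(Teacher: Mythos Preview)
Your proof is correct and follows essentially the same route as the paper: remove a minimum forcing set $S$ from $G$, note that the remaining induced subgraph has $M\setminus S$ as its unique perfect matching, invoke Theorem~\ref{l3} to produce a pendant vertex, and count its neighbours in $V(S)$. The only cosmetic differences are that the paper cites Lemma~\ref{l1} rather than arguing uniqueness directly, and for tightness it uses the family $H_{n,k}$ (of which your $K_{n,n}=H_{n,n-1}$ is a special case) instead of $C_{2n}$.
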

\begin{proof}Let $M$ be a perfect matching of $G$ and $S$ be a minimum forcing set of $M$ such that $|S|=f(G,M)=f(G)$. By Lemma  \ref{l1}, $G-V(S)$ has a unique perfect matching. By Theorem \ref{l3}, $G-V(S)$ has a pendant vertex, say $u$. Then all but one of the neighbors of $u$ are incident with edges in $S$. Combining that $G$ is a bipartite graph, we obtain that $f(G)=|S|\geq d_G(u)-1\geq\delta(G)-1$.

Note that $H_{n,k}$ is a bipartite graph with $\delta(H_{n,k})=k+1$. Since equality in (\ref{e5}) holds for $H_{n,k}$, we have $f(H_{n,k})=k=\delta(H_{n,k})-1$. Thus the bound is tight.
\end{proof}
For a graph $G$ of order $2n$, we say a set $U =\{u_1,u_2,\dots, u_n\}$ \emph{forces a unique perfect matching} in $G$ if $u_i$ is a pendant vertex of $G_i$ whose only neighbor is $v_i$ for every $1\leq i\leq n$, where $G_1=G$, $G_i=G_{i-1}-\{u_{i-1},v_{i-1}\}$ for $2\leq i\leq n$. Clearly, if $U$ forces a unique perfect matching in $G$, then $\{u_iv_i|i=1,2,\dots,n\}$ is a unique perfect matching of $G$.
\vspace{2mm}

 For cographs and split graphs, Chaplick et al. \cite{45} obtained the
following result.
\begin{lem}\label{l9}\cite{45} If $G$ is a cograph or a split graph, then $G$ has a unique perfect matching if and only if some set forces a unique
perfect matching in $G$.
\end{lem}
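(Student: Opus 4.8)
The plan is to prove both directions of the equivalence in Lemma~\ref{l9}, for the two graph classes separately, using induction on the order $2n$.

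For the ``if'' direction, the statement is actually class-independent: if some set $U=\{u_1,\dots,u_n\}$ forces a unique perfect matching in $G$, then $M=\{u_iv_i\mid 1\le i\le n\}$ is the unique perfect matching of $G$. I would argue by induction on $n$: the vertex $u_1$ is pendant in $G_1=G$ with sole neighbor $v_1$, so every perfect matching of $G$ must contain $u_1v_1$; hence the perfect matchings of $G$ correspond bijectively to those of $G-\{u_1,v_1\}=G_2$, in which $\{u_2,\dots,u_n\}$ forces a unique perfect matching. By induction $G_2$ has a unique perfect matching, so $G$ does too. This half needs no structural hypothesis on $G$.

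For the ``only if'' direction, I would split into the two cases and use the recursive structure of each class. \emph{Split graphs:} write $V(G)=K\cup I$ with $K$ a clique and $I$ independent. If $G$ has a perfect matching, some vertex of $I$ must be matched into $K$; more to the point, if $G$ has a \emph{unique} perfect matching $M$, I would show $G$ must have a pendant vertex $u$ (otherwise one produces an $M$-alternating cycle, contradicting uniqueness via Lemma~\ref{l1}---indeed in a split graph any two $M$-edges with endpoints inside $K$, or suitable short configurations through $K$, yield a $4$-cycle alternating with $M$). Once a pendant vertex $u$ with neighbor $v$ is found, $uv\in M$, and $G-\{u,v\}$ is again a split graph with a unique perfect matching; induction gives a forcing order, and prepending $u$ finishes it. \emph{Cographs:} use that a cograph is a singleton, a disjoint union $G_1\cup G_2$, or a join $G_1\vee G_2$. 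In the disjoint-union case the perfect matching and its uniqueness decompose across the components, and a forcing order for each component concatenates to one for $G$. In the join case $G=G_1\vee G_2$: if $G$ has a unique perfect matching, I would show one of $G_1,G_2$ is trivial or that the join forces pendant structure---e.g. if both $|V(G_1)|,|V(G_2)|\ge 2$ the join edges create an abundance of alternating $4$-cycles, contradicting uniqueness, so essentially one side is a single vertex $w$, which is then matched to exactly one vertex on the other side, and again we peel off a pendant vertex and induct.

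The main obstacle I anticipate is the ``only if'' direction, specifically verifying in each recursive case that \emph{uniqueness of the perfect matching actually forces the existence of a pendant vertex}---this is the crux, because once a pendant vertex exists, deletion stays inside the class and the induction runs smoothly. For general graphs this pendant-vertex property is exactly Theorem~\ref{l3}/Theorem~\ref{cl4}'s spirit but those give it only for bipartite or extremal cases; here one must exploit the clique in a split graph, or the join/union decomposition in a cograph, to rule out $M$-alternating cycles when no pendant vertex is present. I would handle this by a short case analysis on where the $M$-edges sit relative to the clique $K$ (resp. the join halves), each time exhibiting an explicit alternating $4$-cycle, and then invoke Lemma~\ref{l1} to contradict $f(G,M)=0$. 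The bookkeeping that the deleted graph $G-\{u,v\}$ remains a split graph (obvious) or a cograph (cographs are closed under vertex deletion) is routine.
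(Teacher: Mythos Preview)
The paper does not prove Lemma~\ref{l9}; it is quoted from Chaplick, F\"urst, Maffray and Rautenbach~\cite{45} and used as a black box. So there is no in-paper argument to compare your proposal against. That said, let me comment on the proposal itself.

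Your ``if'' direction is fine and needs no class hypothesis. For split graphs your outline is essentially right, but the parenthetical ``each time exhibiting an explicit alternating $4$-cycle'' is too optimistic: when no $M$-edge lies inside the clique $K$ (the case $p=0$), two $M$-edges $c_id_i$, $c_jd_j$ with $c_i,c_j\in K$ and $d_i,d_j\in I$ need not sit on a $4$-cycle, because $d_id_j\notin E(G)$ and $d_jc_i$ need not be an edge. What does work is to observe that the bipartite subgraph between $I$ and $K$ inherits the unique perfect matching and then invoke Theorem~\ref{l3} to obtain a pendant vertex on the $I$-side; alternatively one inducts after peeling off the (at most one) $M$-edge inside $K$. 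Either way the result is true, but not by a bare $4$-cycle argument.

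The more serious gap is in the cograph join case. You correctly argue that if $G=G_1\vee G_2$ has a unique perfect matching then one side, say $G_1=\{w\}$, is a singleton. But $w$ is \emph{not} pendant---it has degree $2n-1$---so ``peel off a pendant vertex and induct'' does not go through as written. What one must show is that $G_2$ has an isolated vertex (which is then pendant in $G$). This requires further structural work: from uniqueness one deduces that $G_2$ cannot itself be a join (else $G$ would split as a join of two parts of size $\ge 2$), hence $G_2$ is disconnected; parity forces exactly one odd component $H_1$, and $w$ must be matched into $H_1$; one then recurses on $H_1$ until reaching a singleton component. None of this is captured by ``exhibit an alternating $4$-cycle,'' and without it the existence of a pendant vertex in $G$ is unproved.
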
 Lemma \ref{l9} guarantees the following result.
\begin{thm}\label{thm3}
If $G$ is a split graph or a cograph with a perfect matching,
then $f(G)\geq \frac{\delta(G)-1}{2}$. Moreover, the bound is tight.
\end{thm}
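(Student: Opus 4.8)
The plan is to follow the template of the proof of Theorem~\ref{thm2}, replacing the appeal to Theorem~\ref{l3} (which needs bipartiteness) by an appeal to Lemma~\ref{l9}, and keeping careful track of how many vertices a minimum forcing set can cover. First I would fix a perfect matching $M$ of $G$ and a minimum forcing set $S\subseteq M$, so that $|S|=f(G,M)=f(G)$. By Lemma~\ref{l1}, the induced subgraph $G-V(S)$ contains no $M$-alternating cycle; since $S\subseteq M$, the restriction $M\setminus S$ is a perfect matching of $G-V(S)$, hence it is the \emph{unique} perfect matching of $G-V(S)$. Because $f(G)\le n-1$, the graph $G-V(S)$ is nonempty.

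The next step, which I expect to be the only genuinely non-routine point, is to observe that both graph classes in the statement are hereditary, so that Lemma~\ref{l9} may be applied to $G-V(S)$: a clique/independent-set partition of $V(G)$ restricts to such a partition of any induced subgraph, so an induced subgraph of a split graph is split; and cographs, being exactly the $P_4$-free graphs (equivalently, by their recursive definition), are likewise closed under induced subgraphs. Hence $G-V(S)$ is again a split graph, respectively a cograph, with a unique perfect matching, and Lemma~\ref{l9} yields a set forcing a unique perfect matching in $G-V(S)$; in particular $G-V(S)$ has a pendant vertex $u$, say with unique neighbour $v$ inside $G-V(S)$. Every other neighbour of $u$ in $G$ must then lie in $V(S)$, and $|V(S)|=2|S|$ since $S$ is a matching, so $d_G(u)\le 1+2|S|=1+2f(G)$. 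Therefore $\delta(G)\le d_G(u)\le 1+2f(G)$, which rearranges to $f(G)\ge\frac{\delta(G)-1}{2}$, as desired.

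For tightness I would use the graphs $\hat{H}_{n-k,0}\vee K_{2k}$ from Theorem~\ref{p3}, for $0\le k\le n-1$. Each of these is obtained from a single vertex by successively adding isolated and universal vertices (in $\hat{H}_{m,0}$ the vertex $v_1$ is universal and, after its removal, $u_1$ is isolated; joining with $K_{2k}$ adds $2k$ universal vertices), so each is a threshold graph and in particular is simultaneously a split graph and a cograph. A short degree count gives $\delta(\hat{H}_{n-k,0}\vee K_{2k})=2k+1$, the minimum being attained at the vertex $u_1$ of $\hat{H}_{n-k,0}$, while $\hat{H}_{n-k,0}\vee K_{2k}$ is the extremal graph of Theorem~\ref{p3} and hence satisfies $f(\hat{H}_{n-k,0}\vee K_{2k})=k$; thus $f=k=\frac{\delta-1}{2}$. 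The special case $k=n-1$ is simply $K_{2n}$, for which one checks directly that $f(K_{2n})=n-1=\frac{\delta(K_{2n})-1}{2}$ (a forcing set of size at most $n-2$ leaves at least two $M$-edges, whose $4$ endpoints span a $K_4$ with an $M$-alternating $4$-cycle). Once the heredity observation is in place, the degree inequality and this tightness check are routine, so the proof is complete.
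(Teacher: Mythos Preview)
Your proof is correct and follows essentially the same approach as the paper's: invoke the hereditariness of split graphs and cographs so that Lemma~\ref{l9} applies to $G-V(S)$, obtain a pendant vertex there, and bound its degree in $G$ by $1+2|S|$. The only difference is in the tightness argument: the paper treats the two classes separately, using $\hat{H}_{n-k,0}\vee K_{2k}$ for split graphs and $(n-k)K_2\vee K_{2k}$ for cographs, whereas you observe that $\hat{H}_{n-k,0}\vee K_{2k}$ is in fact a threshold graph and hence covers both classes with a single family.
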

\begin{proof}
Let $M$ and $S$ be defined as that in the proof of Theorem \ref{thm2}. By Lemma \ref{l1}, $G-V(S)$ has a unique perfect matching. Since $G-V(S)$ is still a split graph or a cograph, $G-V(S)$ has a pendant vertex by Lemma \ref{l9}, say $u$. Then all but one of the neighbors are incident with edges in $S$. Hence we have $2|S|\geq d_G(u)-1\geq\delta(G)-1$. So $f(G)=|S|\geq \frac{\delta(G)-1}{2}$.

Next we will show that this bound is tight. Let $G_1=\hat{H}_{n-k,0}\vee K_{2k}$ where $0\leq k \leq n-1$. Since $V(G_1)$ can be partitioned into an independent set $I=\{u_1,u_2,\dots,u_{n-k}\}$ and a clique $V(G_1)\setminus I$,
$G_1$ is a split graph with $\delta(G_1)=2k+1$. Combining that equality in (\ref{e4}) holds for $G_1$, we obtain that $f(G_1)=k=\frac{\delta(G_1)-1}{2}$.

Let $G_2=(n-k)K_2\vee K_{2k}$ where $0\leq k \leq n-1$ and $(n-k)K_2$ denotes $(n-k)$ disjoint copies of $K_2$. Since $(n-k)K_2$ and $K_{2k}$ are two cographs, $G_2$ is a cograph with $\delta(G_2)=2k+1$. By Theorem \ref{thm3}, we have $f(G_2)\geq \frac{\delta(G_2)-1}{2}=k$. Let $M$ be a perfect matching of $G_2$ consisting of $(n-k)K_2$ and a perfect matching $M_1$ of $K_{2k}$. Then $M_1$ is a forcing set of $M$. So $f(G_2)\leq f(G_2,M)\leq |M_1|=k$. Thus, $f(G_2)=k=\frac{\delta(G_2)-1}{2}$.
\end{proof}

\begin{remark}{\rm Theorem \ref{thm3} is not necessarily true for general graphs.

Suppose that $G_3=H\vee K_{2(n-4)}$ where $H$ is shown in Fig. \ref{fe55} and $n\geq 4$. Assume that the vertices of $K_{2(n-4)}$ is $\{u_{i},v_i|i=5,6,\dots,n\}$.

Let $M=M_1\cup M_2$ be a perfect matching of $G_3$ where $M_1=\{u_1v_1,u_2v_2,u_3v_3,u_4v_4\}$ is a perfect matching of $H$ and $M_2$ is that of $K_{2(n-4)}$. Then $M_2\cup\{u_4v_4\}$ is a forcing set of
$M$ since $H-\{u_4,v_4\}$ has a unique perfect matching.
So $f(G_3)\leq f(G_3,M)\leq n-3$.
But $\delta(G_3)=2(n-4)+4=2n-4$ and $\frac{\delta(G_3)-1}{2}=n-\frac{5}{2}>n-3\geq f(G_3)$.}
\end{remark}
\begin{figure}[h]
\centering
\includegraphics[height=3.5cm,width=15cm]{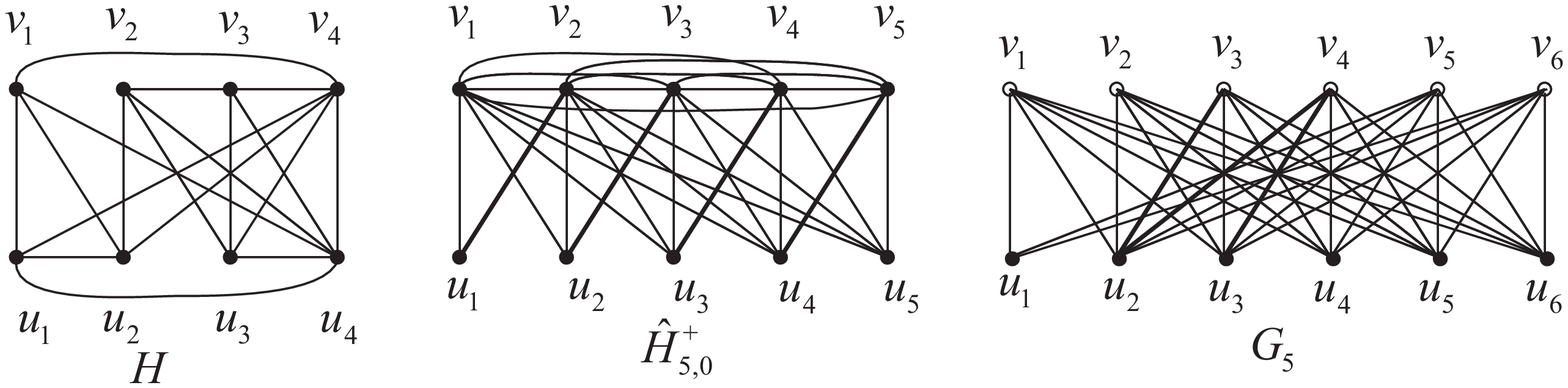}
\caption{\label{fe55} Graph $H$, $\hat{H}_{5,0}^+$ where $n=5, k=0$, and $G_5$ where $n=6$ and $k=i=2$.}
\end{figure}

Using these lower bounds obtained, we can calculate the minimum forcing numbers of some graphs which are not extremal graphs of corresponding minimum forcing numbers.
\begin{ex}Let $G_4=\hat{H}_{n-k,0}^+\vee
 K_{2k}$ where $\hat{H}_{n-k,0}^+$ is a graph obtained from $\hat{H}_{n-k,0}$ by adding a set of edges $T=\{u_iv_{i+1}|i=1,2,\dots,n-k-1\}$ for some $0\leq k\leq n-2$ (see $\hat{H}_{5,0}^+$ in Fig. \ref{fe55}). Then $f(G_4)=k+1$.
\end{ex}

 \begin{proof}By Remark \ref{rm1}, $f$ is strictly monotonic increasing about the number of edges. Combining Corollary \ref{cp3} and $|T|\geq 1$, we have $$f(G_4)\geq n-\frac{1}{2}-\sqrt{2n^2-n-e(G_4)+\frac{1}{4}}>  n-\frac{1}{2}-\sqrt{2n^2-n-e(\hat{H}_{n-k,0}\vee K_{2k})+\frac{1}{4}}=k$$ as $\hat{H}_{n-k,0}\vee K_{2k}$ is the extremal graph of Theorem \ref{p3}.
 So $f(G_4)\geq k+1$.

On the other hand, let $M=T\cup\{u_{n-k}v_1\}\cup  \{u_{i}v_{i}|i=n-k+1,n-k+2,\dots,n\}$ be a perfect matching of $G_4$. Since
$\{u_1,u_2,\dots,u_{n-k-1}\}$ forces a unique perfect matching in $G_4[V(T)]$,  $G_4[V(T)]$ has a unique perfect matching. So $M\setminus T$ is a forcing set of $M$ and $f(G_4)\leq f(G_4,M)\leq |M\setminus T|=k+1$.
\end{proof}

\begin{ex}\label{exa1}Let $G_5=(U,V)$ be a bipartite graph obtained from $H_{n,k}$ by
adding a set of edges $\{u_iv_{n-k-1},u_iv_{n-k},u_{i+1}v_{n-k}\}$ for some
 $1\leq i\leq n-k-2$ (an example $G_5$ shown in Fig. \ref{fe55}). Then $f(G_5)=k+2$.
\end{ex}
\begin{proof}On one hand, let $M_0=\{u_iv_i|i=1,2,\dots,n\}$ be a perfect matching of $G_5$ and $S_0=\{u_iv_i|i=n-k-1,n-k,\dots,n\}$. Since   $\{u_i|i=1,2,\dots,n-k-2\}$ forces a unique perfect matching in $G_5-V(S_0)$,  $G_5-V(S_0)$ has a unique perfect matching. So $S_0$ is a forcing set of $M_0$ and $f(G_5)\leq f(G_5,M_0)\leq |S_0|=k+2$.

On the other hand, if $i=1$, then $\delta(G_5)=k+3$. By Theorem \ref{thm2},  $f(G_5)\geq k+2$. Suppose that  $i\geq2$ and $M$ is a perfect matching of $G_5$. Let   $L=\{u_jv_{l_j}|j=1,2,\dots,i-1\}$ be a subset of $M$. Then $1\leq l_j\leq j$ or $l_j\geq n-k+1$.

Let $G_5'=G_5-V(L)$ and $M'=M\setminus L$. Then $G_5'$ is a bipartite graph with bipartition $U'\cup V'$, where $U'=U\cap V(G_5')$ and $V'=V\cap V(G_5')$. Next we will prove that $\delta(G_5')\geq k+3$.
Since $d_{G_5}(u_i)=d_{G_5}(u_{i+1})=k+i+2,~d_{G_5}(v_{n-k-1})=d_{G_5}(v_{n-k})=k+3$ and other vertices have same degree as in $H_{n,k}$. Combining that $|V\setminus V'|=i-1$ we have $$d_{G_5'}(u_j)\geq d_{G_5}(u_i)-(i-1)=k+i+2-(i-1)\geq k+3 \text{ for } j=i,i+1,\dots,n \text{ and}$$  $$d_{G_5'}(v_{j})=d_{G_5}(v_{j})\geq d_{G_5}(v_{n-k})=k+3 \text{ for } j=i,i+1,\dots,n-k.$$
For  $1\leq j\leq i-1$ and $v_j\in V'$, we have $$d_{G_5'}(v_{j})=d_{G_5}(v_{j})-(i-j)=(n-j+1)-(i-j)=n-i+1\geq k+3$$ and other vertices of $G_5'$ have degree $n-(i-1)\geq k+3$. Thus $\delta(G_5')\geq k+3$.

By Theorem \ref{thm2}, $f(G_5',M')\geq f(G_5')\geq k+2$. By definition of forcing sets, we have $f(G_5,M)\geq f(G_5',M')\geq k+2$. By the arbitrariness of $M$, $f(G_5)\geq k+2$.
\end{proof}

\section{\normalsize Some upper bounds of the maximum forcing number}
Let $G$ be a graph with a perfect matching. Lei et al. \cite{43} obtained that $F(G)$ is no more than the maximum anti-forcing number of $G$. Hence, we can derive two upper bounds of $F(G)$ from those of the maximum anti-forcing number.

The anti-forcing number of a graph was introduced by Vuki$\check{c}$evi$\acute{c}$ and Trinajsti$\acute{c}$ \cite{51} as the smallest number of edges whose
removal results in a subgraph with a unique perfect matching. Recently, Lei et al. \cite{43} defined the \emph{anti-forcing number} of a perfect matching $M$ of $G$ as the minimal number of edges not in $M$ whose removal to make $M$ as a single perfect matching of the resulting graph.
The \emph{maximum anti-forcing number} of $G$, denoted by $Af(G)$, is the maximum value of anti-forcing numbers over all perfect matchings of $G$.

For a connected graph $G$, the \emph{cyclomatic number} of it is defined as $r(G)=|E(G)|-|V(G)|+1$.
Deng and Zhang \cite{52} obtained that $Af(G)\leq r(G)$. Afterwards, Shi and Zhang \cite{11} gave a new bound $Af(G)\leq \frac{2|E(G)|-|V(G)|}{4}$.
By these, we obtain the following result.
\begin{cor}\label{c1} Let $G$ be a connected graph of order $2n$ and with a perfect matching. Then
\begin{equation*}
 F(G)\leq
 \begin{cases}
 \frac{e(G)-n}{2}, & \quad {\text{if $e(G)\geq3n-2$}};\\
 e(G)-2n+1,&\quad {\text{otherwise}}.
 \end{cases}
 \end{equation*}
\end{cor}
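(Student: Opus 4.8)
The plan is to deduce this purely from results already cited: the inequality $F(G)\le Af(G)$ of Lei et al.\ \cite{43}, together with the two known upper bounds $Af(G)\le r(G)$ of Deng and Zhang \cite{52} and $Af(G)\le\frac{2e(G)-|V(G)|}{4}$ of Shi and Zhang \cite{11}. So the whole argument is: invoke these three facts, specialize to a connected graph of order $2n$, and then decide which of the two resulting bounds is the smaller one.

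First I would record that since $G$ has a perfect matching its order is $2n$, so $|V(G)|=2n$ throughout, and since $G$ is connected its cyclomatic number is $r(G)=e(G)-2n+1$. Feeding this into the Deng--Zhang bound gives $F(G)\le e(G)-2n+1$, while the Shi--Zhang bound gives $F(G)\le\frac{2e(G)-2n}{4}=\frac{e(G)-n}{2}$. Combining, $F(G)\le\min\bigl\{e(G)-2n+1,\ \tfrac{e(G)-n}{2}\bigr\}$, and it only remains to express this minimum as a function of $e(G)$.

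The last step is an elementary comparison: $\tfrac{e(G)-n}{2}\le e(G)-2n+1$ holds if and only if $e(G)-n\le 2e(G)-4n+2$, i.e.\ if and only if $e(G)\ge 3n-2$. Hence $\tfrac{e(G)-n}{2}$ is the binding bound exactly when $e(G)\ge 3n-2$, and $e(G)-2n+1$ is binding otherwise, which is precisely the claimed piecewise inequality. There is no real obstacle here since the statement is a direct corollary; the only things worth making explicit are that connectivity is what makes $r(G)$ (and the Shi--Zhang estimate) available, that the perfect matching hypothesis forces $|V(G)|=2n$ so the substitution above is legitimate, and the crossover computation $e(G)\gtrless 3n-2$ that separates the two cases.
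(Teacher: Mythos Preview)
Your proposal is correct and follows exactly the approach the paper intends: the paper simply writes ``By these, we obtain the following result'' after citing $F(G)\le Af(G)$ \cite{43}, $Af(G)\le r(G)$ \cite{52}, and $Af(G)\le\frac{2e(G)-|V(G)|}{4}$ \cite{11}, and you have spelled out precisely that deduction together with the crossover computation at $e(G)=3n-2$. One tiny imprecision: connectivity is needed only for the formula $r(G)=e(G)-2n+1$; the Shi--Zhang bound holds without it.
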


In this section, we will characterize all graphs $G$ with $F(G)=\frac{e(G)-n}{2}$. But we have not been able to characterize the other yet.  Furthermore, we would give a new upper bound on $F(G)$ and obtain that the new bound is better than Corollary \ref{c1} for graphs $G$ with a larger number of edges.

Given $S,T\subseteq V(G)$, we write $E(S,T)$ for the set of edges having one end-vertex in $S$ and the other in $T$ and $e(S,T)$ for the number of edges in $E(S,T)$.

\begin{prop}\label{p4}Let $G$ be a graph of order $2n$ and with a perfect matching. Then $F(G)\leq\frac{e(G)-n}{2}$, and equality holds if and only if $G$ consists of $\frac{e(G)-n}{2}$ cycles of length 4 and $2n-e(G)\geq 0$ independent edges.
\end{prop}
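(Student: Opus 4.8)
\emph{Plan of proof.} The bound will be obtained from a greedy process that destroys all $M$-alternating cycles. Fix any perfect matching $M$ of $G$ and set $G_1:=G$. Repeat the following: while $G_i$ has an $M$-alternating cycle $C_i$, choose a matching edge $u_iv_i$ of $C_i$, put it into a set $S$, and set $G_{i+1}:=G_i-\{u_i,v_i\}$. When the process stops, after $s$ steps say, the graph $G-V(S)$ has a unique perfect matching (it has no $M$-alternating cycle), and hence $S$ is a forcing set of $M$ by Lemma~\ref{l1} (an $M$-alternating cycle of $G$ avoiding all edges of $S$ would avoid $V(S)$, hence lie in $G-V(S)$). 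The key observation is that step $i$ removes at least two non-$M$ edges of $G$ that were still present in $G_i$, namely the two edges of $C_i$ meeting $u_i$ and $v_i$ other than $u_iv_i$ itself (these are distinct, since an edge meeting both $u_i$ and $v_i$ would have to be $u_iv_i$). Since $G$ has exactly $e(G)-n$ non-$M$ edges, it follows that $2s\le e(G)-n$, so $f(G,M)\le s\le\frac{e(G)-n}{2}$, and therefore $F(G)\le\frac{e(G)-n}{2}$. (One could instead derive this inequality from Corollary~\ref{c1} after reducing to connected components, but the process above is what the equality case needs.)

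\emph{Sufficiency.} If $G$ is a disjoint union of $c$ copies of $C_4$ and $d$ copies of $K_2$, then $2n=4c+2d$ and $e(G)=4c+d$, whence $c=\frac{e(G)-n}{2}$ and $d=2n-e(G)$; as the forcing number is additive over components and $F(C_4)=1$, $F(K_2)=0$, we obtain $F(G)=c=\frac{e(G)-n}{2}$.

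\emph{Necessity.} Assume $F(G)=\frac{e(G)-n}{2}$ and pick $M$ with $f(G,M)=F(G)$; run the process above on this $M$. It returns a forcing set of size $s$, so $F(G)\le s$, while $2s\le e(G)-n=2F(G)$; hence $s=F(G)$ and \emph{each} step removes \emph{exactly} two non-$M$ edges. This is robust: it must hold for \emph{every} run of the process, since a run with a step removing three non-$M$ edges would yield a forcing set of size smaller than $F(G)$. From this I would read off the local structure: whenever a run chooses a matching edge $u_iv_i$ lying on an $M$-alternating cycle of $G_i$, the vertices $u_i$ and $v_i$ must have degree exactly $2$ in $G_i$ (the matching edge together with one cycle edge). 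Since every matching edge of an $M$-alternating cycle $C$ in $G_i$ is an admissible choice, all vertices of $C$ have degree $2$ in $G_i$, so $C$ is a whole connected component of $G_i$, necessarily an even cycle $C_{2\ell}$. If $\ell\ge 3$, this component has to be broken up before the process halts (otherwise the final graph would still contain the alternating cycle $C_{2\ell}$), but the only way to do so is to choose $C_{2\ell}$ and delete two adjacent vertices, leaving a path $P_{2\ell-2}$ that retains $\ell-2\ge 1$ non-$M$ edges which are then never removed — contradicting that all $e(G)-n$ non-$M$ edges get removed. Hence $\ell=2$, so every $M$-alternating cycle of $G=G_1$ is a $C_4$-component. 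Therefore each component of $G$ carrying an $M$-alternating cycle is a $C_4$, and each component carrying none is never touched by the process, so it lies in $G-V(S)$ and hence has no non-$M$ edge, i.e.\ it is a single edge $K_2$. Thus $G$ is a disjoint union of $C_4$'s and $K_2$'s, and the count in the sufficiency part shows it has $\frac{e(G)-n}{2}$ four-cycles and $2n-e(G)$ independent edges.

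\emph{The main obstacle.} The inequality and the sufficiency direction are routine; the work is in the necessity, and the point I would rely on is that the tightness ``exactly two non-$M$ edges removed per step'' is forced for \emph{every} run of the greedy, not just one. That robustness is precisely what pins down the degree-$2$ structure at every removable matching edge, after which the global statement ``every component is $C_4$ or $K_2$'' is just the bookkeeping that no non-$M$ edge may survive to the end.
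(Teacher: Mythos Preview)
Your argument is correct and is genuinely different from the paper's proof. The paper does not prove the inequality directly at all: it simply cites Corollary~\ref{c1}, which in turn rests on the anti-forcing bound $Af(G)\le \frac{2|E(G)|-|V(G)|}{4}$ of Shi and Zhang. For sufficiency the paper invokes the Pachter--Kim minimax theorem (Theorem~\ref{thm6}). For necessity, the paper fixes a perfect matching $M$ with $f(G,M)=F(G)$ and a \emph{minimum} forcing set $S$, and then counts edges: for each $e\in S$ the minimality of $S$ produces an $M$-alternating cycle $C_e$ through $e$ inside $G[V((M\setminus S)\cup\{e\})]$, giving $e(V(M\setminus S),V(e))\ge 2$; summing and comparing with $e(G)$ forces $G[V(S)]$ and $G[V(M\setminus S)]$ to consist of independent matching edges, each $C_e$ to be a $4$-cycle, and the $C_e$'s to be pairwise disjoint.

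Your greedy process gives a uniform, self-contained treatment of all three parts without appealing to anti-forcing numbers or the planar minimax theorem. The key idea the paper does not use is the \emph{robustness} step: because the greedy run that removes more than two non-$M$ edges at some stage would output a forcing set smaller than $F(G)$, the degree-$2$ condition must hold at \emph{every} admissible choice, which immediately makes each $M$-alternating cycle a whole component and then forces it to be a $C_4$ by the ``no surviving non-$M$ edge'' bookkeeping. Conversely, the paper's approach is shorter once the cited results are in hand and integrates naturally with Lemma~\ref{c2} (the paper reuses inequality~(\ref{ell3}) there). Either route is fine; yours has the advantage of being elementary and independent of the anti-forcing machinery.
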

\begin{proof}It suffices to prove the second part.
If $G$ consists of $\frac{e(G)-n}{2}$ cycles of length 4 and $2n-e(G)$ independent edges, then $G$ is a plane bipartite graph and has exactly
$\frac{e(G)-n}{2}$ $M$-alternating cycles for any perfect matching $M$ of $G$. By Theorem \ref{thm6}, $f(G,M)=\frac{e(G)-n}{2}$. So $F(G)=\frac{e(G)-n}{2}$.

Conversely, if $F(G)=\frac{e(G)-n}{2}$, then there exists a perfect matching $M$ of $G$ and a minimum forcing set $S$ of $M$ such that $|S|=f(G,M)=\frac{e(G)-n}{2}$. By Lemma \ref{l1}, we have $G[V(M\setminus S)]$ contains no $M$-alternating cycles. But $S\setminus \{e\}$ is not a forcing set of $M$ for any edge $e$ of $S$ by the minimality of $S$.  By Lemma \ref{l1}, $G[V((M\setminus S)\cup \{e\})]$ contains an $M$-alternating cycle $C_e$. So $e$ is contained in $C_e$ and
\begin{eqnarray}
e(V(M\setminus S),V(S))=\sum_{e\in S}e(V(M\setminus S),V(e))\geq 2|S|\label{ell3}.
\end{eqnarray}

 Since $e(G)\geq e(V(M\setminus S),V(S))+|M|\geq 2|S|+n=e(G)$, we obtain that all equalities hold. Thus $e(V(M\setminus S),V(S))= 2|S|$, and both $G[V(S)]$ and $G[V(M\setminus S)]$ consist of independent edges. By equality (\ref{ell3}),
 $e(V(M\setminus S),V(e))=2$ for each edge $e$ of $S$. So $C_e$ is an $M$-alternating 4-cycle.

Moreover, $C_{e_1}\cap C_{e_2}=\emptyset$ for any pair of distinct edges $e_1$ and $e_2$ of $S$. Otherwise, there exist two edges $e_1$ and $e_2$ of $S$ so that $e'\in E(C_{e_1})\cap E(C_{e_2})$ for some edge $e'$ of $M\setminus S$. Then $E(V(M\setminus (S\cup \{e'\}),V(\{e_1,e_2\}))=\emptyset$. Thus $(S\setminus \{e_1,e_2\})\cup \{e'\}$ is a forcing set of $M$ with size less than $S$, a contradiction. Hence $|M\setminus S|\geq |S|$, which implies $e(G)\leq 2n$.
 Therefore, $G$ consists of $\frac{e(G)-n}{2}$ cycles of length 4 and $2n-e(G)$ independent edges.
\end{proof}

Next we will give a new upper bound of $F(G)$ and we need a lemma as follows.
\begin{lem}\label{c2}Let $G$ be a graph of order $2n$ and with $f(G,M)=k$ for $0\leq k\leq n-1$. Then there exists an edge $uv\in M$ such that $d_G(u)+d_G(v)\geq \frac{2n}{n-k}$. If equality holds, then $(n-k)\mid n$.
\end{lem}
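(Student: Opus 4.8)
The plan is to argue by contradiction via a counting/averaging argument over a minimum forcing set. Suppose $f(G,M)=k$ and that $d_G(u)+d_G(v) < \frac{2n}{n-k}$ for every edge $uv\in M$. Fix a minimum forcing set $S\subseteq M$ with $|S|=k$, and write $M\setminus S=\{u_1v_1,\dots,u_{n-k}v_{n-k}\}$. By Lemma~\ref{l1}, $G[V(M\setminus S)]$ contains no $M$-alternating cycle, so (by Theorem~\ref{cl4}, or directly since a graph on $2m$ vertices with a unique perfect matching has at most $m^2$ edges) we get $e(G[V(M\setminus S)])\le (n-k)^2$. The idea is then to bound $e(G)$ from above: $e(G) = e(G[V(M\setminus S)]) + e(V(M\setminus S),V(S)) + e(G[V(S)])$, and to turn the degree-sum hypothesis into a bound on the total number of edge-endpoints incident to $V(M\setminus S)$.

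The key step is the following. Summing the hypothesis over the $n-k$ edges of $M\setminus S$ gives
\begin{equation*}
\sum_{i=1}^{n-k}\bigl(d_G(u_i)+d_G(v_i)\bigr) < (n-k)\cdot\frac{2n}{n-k}=2n.
\end{equation*}
But the left-hand side counts, with multiplicity, all edges having at least one endpoint in $V(M\setminus S)$; in particular it is at least $2(n-k)$ (each matching edge of $M\setminus S$ contributes to both its endpoints' degrees) plus the contributions forcing alternating cycles. The cleaner route: since $S$ is a \emph{minimum} forcing set, for each $e=u'v'\in S$ the set $S\setminus\{e\}$ is not forcing, so by Lemma~\ref{l1} there is an $M$-alternating cycle $C_e$ in $G[V((M\setminus S)\cup\{e\})]$, and $C_e$ uses at least two edges between $V(e)$ and $V(M\setminus S)$ — exactly as in the proof of Proposition~\ref{p4}, inequality~(\ref{ell3}). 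Hence $e(V(M\setminus S),V(S))\ge 2k$. Combining,
\begin{equation*}
\sum_{i=1}^{n-k}\bigl(d_G(u_i)+d_G(v_i)\bigr)\ge 2e(G[V(M\setminus S)])+e(V(M\setminus S),V(S))+\text{(self-loops? none)}\ge 2(n-k)+2k=2n,
\end{equation*}
where the first $2(n-k)$ comes from the $n-k$ matching edges inside $V(M\setminus S)$ and the $2k$ from the cross edges forced above. This contradicts the strict inequality $<2n$, proving the existence of $uv\in M$ with $d_G(u)+d_G(v)\ge\frac{2n}{n-k}$.

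For the second assertion, suppose equality $d_G(u)+d_G(v)=\frac{2n}{n-k}$ is attained and is in fact the \emph{minimum} of $d_G(u)+d_G(v)$ over $M$ (which is the situation forced when the bound is tight — no edge does better). Then the averaging above must be an equality throughout, so every edge of $M$ has degree-sum exactly $\frac{2n}{n-k}$; in particular $\frac{2n}{n-k}$ is an integer, say $\frac{2n}{n-k}=t$. Then $n-k$ divides $2n$, and writing $n=q(n-k)+r$ one checks $n-k\mid 2r$ with $0\le r<n-k$, forcing $r=0$ (if $n-k$ is odd) or $r\in\{0,\tfrac{n-k}{2}\}$; the remaining case $r=\tfrac{n-k}{2}$ would make $t=2q+1$ odd but also forces a parity clash with $\sum_{uv\in M}(d_G(u)+d_G(v))=nt$ being even (it counts edge-endpoints with each $M$-edge counted twice plus cross/internal edges an even number of times under the tight configuration), so $r=0$, i.e. $(n-k)\mid n$. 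I expect the main obstacle to be pinning down exactly which "equality" is meant in the statement and making the parity/divisibility argument for the second sentence airtight; the first part is a routine double-counting once the minimality of $S$ is exploited as in Proposition~\ref{p4}.
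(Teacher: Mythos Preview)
Your argument for the first assertion is essentially the paper's: pick a minimum forcing set $S$ of $M$, use the minimality of $S$ (exactly as in the proof of Proposition~\ref{p4}) to get $e(V(M\setminus S),V(S))\ge 2k$, and average the degree sums over the $n-k$ edges of $M\setminus S$. The detour through $e(G[V(M\setminus S)])\le (n-k)^2$ and an upper bound on $e(G)$ is irrelevant and you rightly abandon it; the only lower bound you need inside $V(M\setminus S)$ is $e(G[V(M\setminus S)])\ge n-k$. One small point: the edge $uv$ witnessing the inequality is actually produced in $M\setminus S$ (it is the one with \emph{maximum} degree sum there), not merely in $M$; your contradiction hypothesis over all of $M$ is stronger than needed but harmless.

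The second assertion, however, has a real gap. From equality in the averaging you can only conclude that every edge of $M\setminus S$ has degree sum $t=\frac{2n}{n-k}$, not every edge of $M$; your sentence ``every edge of $M$ has degree-sum exactly $\frac{2n}{n-k}$'' is unjustified, and the parity argument built on $\sum_{uv\in M}(d_G(u)+d_G(v))=nt$ collapses once you lose control of the $k$ edges in $S$. (Indeed $\sum_{uv\in M}(d_G(u)+d_G(v))=2e(G)$ is always even, but the contribution from $S$ is $2e(G)-2n$, which tells you nothing about the parity of $t$.) The paper's route is structural rather than arithmetic: equality in the chain forces $e(G[V(M\setminus S)])=n-k$ and $e(V(M\setminus S),V(e))=2$ for every $e\in S$, so $G[V(M\setminus S)]$ is a perfect matching and each $M$-alternating cycle $C_e$ is a $4$-cycle using $e$ together with a single edge $xy\in M\setminus S$. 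Hence the two cross edges from $V(e)$ land one on $x$ and one on $y$; summing over $e\in S$ gives $d_G(x)=d_G(y)$ for every $xy\in M\setminus S$. Thus $t=2d_G(u)$ is even and $(n-k)\mid n$. This pairing of cross edges via the $4$-cycles is the missing idea in your attempt.
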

\begin{proof}Let $S$, $e$ and $C_e$ be defined as in the proof of necessity of Proposition \ref{p4}. Then $e$ is contained in an $M$-alternating cycle $C_e$ and (\ref{ell3}) holds.

Let $d_G(u)+d_G(v)=$ max$\{d_G(x)+d_G(y)|xy\in M\setminus S\}$. Then
\begin{eqnarray}
(n-k)[d_G(u)+d_G(v)]&\geq& \sum_{xy\in M\setminus S}[d_G(x)+d_G(y)]\label{e1}\\
                   &=&2e(G[V(M\setminus S)])+e(V(M\setminus S),V(S))\notag\\
                   &\geq&2(n-k)+2k\label{e2}\\
                   &=&2n.\notag
\end{eqnarray}So we obtain the required result.

If $d_G(u)+d_G(v)=\frac{2n}{n-k}$, then all equalities in (\ref{ell3})-(\ref{e2}) hold. So $e(V(M\setminus S),V(e))=2$ for each edge $e$ of $S$,
$d_G(u)+d_G(v)=d_G(x)+d_G(y)$ for each edge $xy$ of $M\setminus S$, and $G[V(M\setminus S)]$ consists of $n-k$ independent edges. Thus $C_e$ is a cycle of length 4, and $d_G(x)=d_G(y)$ for each edge $xy$ of $M\setminus S$. So $2n=(n-k)[d_G(u)+d_G(v)]=2(n-k)d_G(u)$ and $(n-k)\mid n$.
\end{proof}

\begin{thm}\label{p7}Let $G$ be a graph of order $2n$ and with $F(G)=k$ for $0\leq k\leq n-1$. Then
\begin{eqnarray}
e(G)\geq \frac{n(n+1)}{n-k}-k-1.\label{e3}
\end{eqnarray}
\end{thm}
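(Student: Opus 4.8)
The plan is to fix a perfect matching $M$ of $G$ with $f(G,M)=F(G)=k$ and to peel off $M$ one matched pair of vertices at a time, invoking Lemma \ref{c2} at each stage to force the removed pair to have large degree sum in the current graph. Set $G_0=G$, $M_0=M$; having built $G_{i-1}$ with its perfect matching $M_{i-1}$, write $k_{i-1}=f(G_{i-1},M_{i-1})$ and pick an edge $u_iv_i\in M_{i-1}$ that, for $i\leq k$, is supplied by Lemma \ref{c2} with $d_{G_{i-1}}(u_i)+d_{G_{i-1}}(v_i)\geq \frac{2(n-i+1)}{(n-i+1)-k_{i-1}}$, and, for $i>k$, is arbitrary; then put $G_i=G_{i-1}-\{u_i,v_i\}$, $M_i=M_{i-1}\setminus\{u_iv_i\}$. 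Each $G_{i-1}$ has order $2(n-i+1)$ and a perfect matching, and $0\leq k_{i-1}\leq n-i$ since deleting one edge of $M_{i-1}$ always leaves a forcing set of $M_{i-1}$; hence Lemma \ref{c2} is genuinely applicable at every step $i\leq k$.

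The estimate rests on two facts. First, removing a matched pair drops the forcing number of the matching by at most one: if $S$ is a forcing set of $M_i$ in $G_i$ then $S\cup\{u_iv_i\}$ is a forcing set of $M_{i-1}$ in $G_{i-1}$, because by Lemma \ref{l1} every $M_{i-1}$-alternating cycle either uses $u_iv_i$ or, $u_iv_i$ being the only $M_{i-1}$-edge at $u_i$ and at $v_i$, avoids both $u_i$ and $v_i$ and so survives in $G_i$ as an $M_i$-alternating cycle. Thus $k_{i-1}\leq k_i+1$, and induction yields $k_{i-1}\geq k-(i-1)$ for every $i$. Second, a telescoping identity: since $d_{G_{i-1}}(u_i)+d_{G_{i-1}}(v_i)-2=e(\{u_i,v_i\},V(G_i))$ (the edge $u_iv_i$ accounts for the subtracted $2$ and the other incidences at $u_i,v_i$ land in $V(G_i)$), and the right-hand side summed over $i$ counts each edge of $G$ outside $M$ exactly once --- at the step where the first of the two $M$-edges covering its endpoints is removed --- we obtain $\sum_{i=1}^{n}\bigl(d_{G_{i-1}}(u_i)+d_{G_{i-1}}(v_i)-2\bigr)=e(G)-n$.

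Combining the two: for $1\leq i\leq k$ we have $k_{i-1}\geq k-i+1\geq 1$, so Lemma \ref{c2} gives $d_{G_{i-1}}(u_i)+d_{G_{i-1}}(v_i)-2\geq \frac{2k_{i-1}}{(n-i+1)-k_{i-1}}\geq \frac{2(k-i+1)}{n-k}$, the last inequality because $x\mapsto\frac{2x}{m-x}$ is increasing on $(0,m)$ and both $k-i+1$ and $k_{i-1}$ lie in $(0,n-i+1)$; this last fact is where the hypothesis $k\leq n-1$ is used. For $i>k$ the $i$-th summand is nonnegative. Feeding these into the telescoping identity gives $e(G)-n\geq\sum_{i=1}^{k}\frac{2(k-i+1)}{n-k}=\frac{k(k+1)}{n-k}$, which rearranges to $e(G)\geq \frac{n(n+1)}{n-k}-k-1$, as desired.

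I do not anticipate a genuine obstacle --- the proof is just a weighted telescoping of Lemma \ref{c2}. The steps warranting care are the admissibility of Lemma \ref{c2} at every stage (i.e.\ $k_{i-1}\leq n-i$), the ``drops by at most one'' claim and the resulting induction $k_{i-1}\geq k-(i-1)$, and the monotonicity estimate in the final display, which is exactly where the hypothesis $k\leq n-1$ enters.
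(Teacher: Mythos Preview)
Your proof is correct. Both your argument and the paper's rest on the same engine --- iterated application of Lemma~\ref{c2} to peel off matched pairs while tracking how the forcing number of the residual graph decays --- but the packaging differs in two respects worth noting. The paper proceeds by induction on $n$, tracks $F(G')$ after deleting one pair, and splits into the cases $F(G')=k-1$ and $F(G')=k$, handling each by a separate calculation. You instead unroll the induction completely, track $f(G_{i-1},M_{i-1})$ (which is the quantity actually appearing in the hypothesis of Lemma~\ref{c2}), and replace the case split by the single monotonicity estimate $\frac{2x}{m-x}$ increasing in $x$. This buys you a cleaner one-line inequality chain and a direct telescoping identity $\sum_i\bigl(d_{G_{i-1}}(u_i)+d_{G_{i-1}}(v_i)-2\bigr)=e(G)-n$; the paper's induction, on the other hand, is more modular and makes the role of the hypothesis $F(G)=k$ (as opposed to $f(G,M)=k$ for some $M$) slightly more visible. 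Neither argument dominates the other, but yours is a streamlined variant of the same idea.
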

\begin{proof}We proceed by induction on $n$. For $n=1$, we have $F(G)=k=0$ and $e(G)=1$. So (\ref{e3}) holds. Suppose that  $n\geq 2$. If $k=0$, then $G$ has a unique perfect matching and $e(G)\geq n$.
 Next we suppose that  $1\leq k\leq n-1$.

Since $F(G)=k$, there exists a perfect matching $M$ of $G$ such that $f(G,M)=k$. By Lemma \ref{c2}, there exists an edge $uv\in M$ such that $d_G(u)+d_G(v)\geq \frac{2n}{n-k}$. Let $G'=G-\{u,v\}$. Then $F(G')\geq k-1$. Suppose to the contrary that $F(G')\leq k-2$. Then $M'=M\setminus\{uv\}$ is a perfect matching of $G'$ and $f(G',M')\leq k-2$. Let $S'$ be a minimum forcing set of $M'$. Then $|S'|=f(G',M')$. By Lemma \ref{l1}, $G'-V(S')$ has a unique perfect matching. Combining that $G-V(S'\cup\{uv\})=G-\{u,v\}-V(S')=G'-V(S')$, we obtain that $S'\cup\{uv\}$ is a forcing set of $M$. So $f(G,M)\leq |S'\cup\{uv\}|\leq k-1$, which is a contradiction. Therefore, $k-1\leq F(G')\leq k$.

If $F(G')=k-1\leq n-2$, then $e(G')\geq \frac{(n-1)n}{n-1-(k-1)}-(k-1)-1=\frac{(n-1)n}{n-k}-k$ by the induction hypothesis. By Lemma \ref{c2}, we get that  $$e(G)=e(G')+d_G(u)+d_G(v)-1\geq\frac{(n-1)n}{n-k}-k+\frac{2n}{n-k}-1
=\frac{n^2+n}{n-k}-k-1.$$ Otherwise, we have $F(G')=k$. Since $G'$ has
$2(n-1)$ vertices, we have $F(G')\leq n-2$. By the induction hypothesis,
 $e(G')\geq \frac{(n-1)n}{n-1-k}-k-1$.
By Lemma \ref{c2} and  $1\leq k\leq n-1< 2n-1$, 
\begin{eqnarray*}
e(G)-(\frac{n^2+n}{n-k}-k-1)&=&[e(G')+d_G(u)+d_G(v)-1]-(\frac{n^2+n}{n-k}-k-1)\\
&\geq&[\frac{(n-1)n}{n-1-k}-k-1+\frac{2n}{n-k}-1]-(\frac{n^2+n}{n-k}-k-1)\\
&=&\frac{(n-1)n}{n-1-k}+\frac{k-n^2}{n-k}\\
&=&\frac{k(2n-1-k)}{(n-1-k)(n-k)}>0.
\end{eqnarray*}
Hence (\ref{e3}) holds and we complete the proof.
\end{proof}
By inversing (\ref{e3}), we obtain an upper bound of $F(G)$. 
\begin{cor}\label{cp4} Let $G$ be a graph of order $2n$ and with a perfect matching. Then
\begin{eqnarray}
F(G)\leq\frac{\sqrt{e^2(G)+2(n+1)e(G)-3n^2-2n+1}}{2}-\frac{e(G)+1-n}{2} \label{ie1}.
\end{eqnarray}
\end{cor}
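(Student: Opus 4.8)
The plan is to treat the bound (\ref{e3}) of Theorem \ref{p7} as a quadratic constraint on $k=F(G)$ and solve for $k$. First I would write $k=F(G)$, so that $0\le k\le n-1$ and hence $n-k\ge 1>0$; by Theorem \ref{p7} we have $e(G)\ge \frac{n(n+1)}{n-k}-k-1$.

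Next I would clear the denominator. Multiplying through by the positive quantity $n-k$ preserves the inequality direction and gives $e(G)(n-k)\ge n(n+1)-(k+1)(n-k)$; expanding both sides and collecting the terms in $k$ rearranges this to
\[
k^2+\bigl(e(G)-n+1\bigr)k+\bigl(n^2-n\,e(G)\bigr)\le 0.
\]
The quadratic $q(k)=k^2+(e(G)-n+1)k+(n^2-n\,e(G))$ has positive leading coefficient, so the values of $k$ with $q(k)\le 0$ form the closed interval between its two real roots; in particular $k$ is at most the larger root. Its discriminant works out to
\[
\bigl(e(G)-n+1\bigr)^2-4\bigl(n^2-n\,e(G)\bigr)=e^2(G)+2(n+1)e(G)-3n^2-2n+1,
\]
which is nonnegative because $q$ actually attains a nonpositive value at $k=F(G)$ (equivalently, $e(G)\ge n$ forces $q(0)=n(n-e(G))\le 0$). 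Applying the quadratic formula then gives
\[
F(G)=k\le\frac{-(e(G)-n+1)+\sqrt{e^2(G)+2(n+1)e(G)-3n^2-2n+1}}{2},
\]
and rewriting $-(e(G)-n+1)$ as $-(e(G)+1-n)$ yields exactly (\ref{ie1}).

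The computation is elementary, so there is no real obstacle here; the only points that need care are that $n-k>0$ so that multiplying through does not reverse the inequality, and that the radicand is genuinely nonnegative, which is automatic since $k=F(G)$ is a real solution of $q(k)\le 0$. Because Theorem \ref{p7} is stated only as an inequality, I would not attempt an equality characterization for (\ref{ie1}).
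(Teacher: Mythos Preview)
Your proposal is correct and follows essentially the same approach as the paper: set $k=F(G)$, rewrite the inequality of Theorem~\ref{p7} as the quadratic $k^2+(e(G)+1-n)k+(n^2-ne(G))\le 0$, and solve for $k$. You supply more detail than the paper (justifying $n-k>0$ and the nonnegativity of the discriminant), but the argument is identical in substance.
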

\begin{proof}Let $F(G)=k$. Then $0\leq k\leq n-1$. By Theorem \ref{p7}, $e(G)\geq \frac{n^2+n}{n-k}-k-1$. That is, $k^2+k(e(G)+1-n)-ne(G)+n^2\leq 0$.
By solving the quadratic inequality of $k$, we obtain that
  $(\ref{ie1})$ holds.
\end{proof}

Note that $nK_2$ and $K_{n,n}$ are two graphs such that equalities in (\ref{e3}) and (\ref{ie1}) hold. So the bounds in Theorem \ref{p7} and Corollary \ref{cp4} are tight.

\begin{remark}{\rm
By a simple calculation, we obtain that the upper bound in Corollary \ref{cp4} is less than $\frac{e(G)-n}{2}$
when $e(G)>\frac{7n-2}{3}$ and less than $r(G)$ when  $e(G)>2n-1+\frac{\sqrt{2n^2-2n}}{2}$. Hence for connected graphs $G$ of order  $2n~(n\geq 2)$, the upper bound in Corollary \ref{cp4} is less than that of Corollary \ref{c1} when  $e(G)>2n-1+\frac{\sqrt{2n^2-2n}}{2}$.}
\end{remark}

\section{\normalsize Characterization of bipartite graphs $G$ of order $2n$ and with $f(G)=n-2$}
Che and Chen \cite{1} asked a question: how to characterize the graphs $G$ of order $2n$ and with $f(G)=n-1$. For bipartite graphs, they obtained the following result.
\begin{thm}\cite{10}\label{thm1} Let $G$ be a bipartite graph of order $2n$. Then $f(G)=n-1$ if and only if $G$ is complete bipartite graph $K_{n,n}$.
\end{thm}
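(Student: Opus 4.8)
The plan is to reduce the statement to an edge condition on pairs of matching edges and then let the matching vary. Fix a perfect matching $M$ of the bipartite graph $G=(U,V)$; since $G$ has a perfect matching we have $|U|=|V|=n$, so $G$ is a subgraph of $K_{n,n}$ and $e(G)\le n^2$. Because $f(G,M)\le n-1$ always holds, $f(G,M)=n-1$ is equivalent to $f(G,M)\ge n-1$, that is, to no $(n-2)$-subset of $M$ being a forcing set of $M$; and since forcing sets are closed under taking supersets it is enough to test the subsets $M\setminus\{e_1,e_2\}$. By Lemma \ref{l1}, $M\setminus\{e_1,e_2\}$ fails to be a forcing set exactly when some $M$-alternating cycle has all of its $M$-edges in $\{e_1,e_2\}$. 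As $G$ is bipartite, every cycle is even and alternates between $M$ and $E(G)\setminus M$, so such a cycle has precisely two $M$-edges, namely $e_1$ and $e_2$, and is therefore a $4$-cycle on $V(e_1)\cup V(e_2)$. Writing $e_1=u_1v_1$ and $e_2=u_2v_2$ with $u_i\in U$ and $v_i\in V$, this $4$-cycle is $u_1v_2u_2v_1u_1$, whose two non-matching edges are $u_1v_2$ and $u_2v_1$. Hence I would obtain the characterization: $f(G,M)=n-1$ if and only if for every pair of edges $u_1v_1,u_2v_2\in M$, both ``transversal'' edges $u_1v_2$ and $u_2v_1$ lie in $E(G)$.

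Granting this reduction, I would finish both directions quickly. For the ``if'' direction, if $G=K_{n,n}$ then every transversal edge is present, so the condition above holds for every perfect matching $M$ and $f(G)=n-1$ (the case $n=1$ is trivial, since the only bipartite graph of order $2$ with a perfect matching is $K_{1,1}$, with $f(K_{1,1})=0$). For the ``only if'' direction I would argue by contradiction: suppose $f(G)=n-1$ but $G\ne K_{n,n}$, so that $uv\notin E(G)$ for some $u\in U$ and $v\in V$. Pick any perfect matching $M$ of $G$. Since $uv\notin E(G)$ we have $uv\notin M$, so $u$ is matched in $M$ by an edge $e_1=uv'$ with $v'\ne v$, and $v$ by an edge $e_2=u'v$ with $u'\ne u$, and $e_1\ne e_2$. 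Applying the characterization to $e_1,e_2$ forces $uv\in E(G)$, a contradiction; hence $G=K_{n,n}$.

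An alternative route for the ``only if'' part would avoid the cycle analysis altogether by invoking the bounds already established: from $f(G)\le F(G)\le n-1$ the hypothesis gives $F(G)=n-1$, so Theorem \ref{p7} with $k=n-1$ yields $e(G)\ge \frac{n(n+1)}{n-(n-1)}-(n-1)-1=n^2$, while $G\subseteq K_{n,n}$ gives $e(G)\le n^2$; equality forces $G=K_{n,n}$. For the ``if'' part one could instead invoke Theorem \ref{p6} (whose extremal graph at level $n-1$ is $H_{n,n-1}=K_{n,n}$) together with the obvious forcing set of size $n-1$ in $K_{n,n}$. I would use the first argument as the main proof and record the second as a remark. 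The step needing the most care is the reduction in the first paragraph: one must check, via Lemma \ref{l1} and the evenness of cycles in a bipartite graph, that the only obstruction to $M\setminus\{e_1,e_2\}$ being a forcing set is a transversal $4$-cycle, so that no longer $M$-alternating cycle can interfere. Once that is settled everything else is routine, so I do not expect a serious obstacle.
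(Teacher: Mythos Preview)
The paper does not give its own proof of Theorem~\ref{thm1}; it quotes the result from Che and Chen~\cite{10} and uses it as an input for Section~4. Your main argument is correct and self-contained: the reduction via Lemma~\ref{l1} to the existence of a transversal $4$-cycle on every pair of matching edges is valid, and both directions follow immediately. One cosmetic slip: the clause ``every cycle is even and alternates between $M$ and $E(G)\setminus M$'' should read ``every $M$-alternating cycle is even and has equally many edges in $M$ and in $E(G)\setminus M$''; the conclusion that the obstructing cycle must be a $4$-cycle on $V(e_1)\cup V(e_2)$ is unaffected.

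Your alternative route is also sound and, pleasantly, is essentially the observation the authors themselves make just before Remark~4.5: from $f(G)=n-1$ one gets $F(G)=n-1$, and then Theorem~\ref{p7} forces $e(G)\ge n^2$, while $G\subseteq K_{n,n}$ gives $e(G)\le n^2$, so $G=K_{n,n}$. For the converse, Corollary~\ref{cp6} (rather than Theorem~\ref{p6} directly) gives $f(K_{n,n})\ge n-1$, and any $(n-1)$-subset of a perfect matching is a forcing set, giving equality. Neither Theorem~\ref{p7} nor Corollary~\ref{cp6} relies on Theorem~\ref{thm1}, so there is no circularity; in effect your second argument recovers the cited theorem from the paper's own machinery.
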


The present authors have obtained the following result for general graphs.
\begin{thm}\cite{47}\label{thm7} Let $G$ be a graph of order $2n$. Then  $f(G)=n-1$ if and only if $G$ is a complete multipartite graph with each partite set having size no more than $n$ or $G$ is a graph obtained by adding arbitrary additional edges in the same partite set to $K_{n,n}$.
\end{thm}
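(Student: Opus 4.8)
The plan is to reduce the condition $f(G)=n-1$ to a local statement about non‑neighbourhoods and then split the analysis according to $\delta(G)$. Since $f(G,M)\le n-1$ for every perfect matching $M$, we have $f(G)=n-1$ precisely when no $(n-2)$-subset $S\subseteq M$ is a forcing set, for any $M$. By Lemma~\ref{l1} this fails exactly when the four vertices of the two $M$-edges $ab,cd$ lying outside $S$ carry an $M$-alternating $4$-cycle, i.e.\ when $\{bc,ad\}\subseteq E(G)$ or $\{bd,ac\}\subseteq E(G)$; negating and expanding, the bad case is that some endpoint of one of these two $M$-edges is non-adjacent to both endpoints of the other. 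Hence $f(G)=n-1$ iff for every perfect matching $M$ and every vertex $v$ the non-neighbourhood $\overline{N}_G(v)$ contains no edge of $M$; equivalently, every edge $xz$ such that some vertex is non-adjacent to both $x$ and $z$ lies in no perfect matching of $G$. Applying this with a fixed $M$ and an arbitrary $v$ shows $\overline{N}_G(v)$ is matched by $M$ into $N_G(v)$, so $d_G(v)\ge n$; thus $\delta(G)\ge n$.

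Suppose first that $\delta(G)\ge n+1$. If $\overline G$ is a disjoint union of cliques, then $G$ is complete multipartite; the existence of a perfect matching forces each part to have size $\le n$, so $G$ is of the first claimed type. Otherwise $\overline G$ contains an induced $P_3$, giving $x,y,z$ with $xz\in E(G)$ and $xy,zy\notin E(G)$; by the reformulation $xz$ is in no perfect matching, so $G-x-z$ has no perfect matching. But $G-x-z$ has $2n-2$ vertices and minimum degree at least $\delta(G)-2\ge n-1=\tfrac12|V(G-x-z)|$, so it does have a perfect matching — a contradiction. Hence in this case $G$ is complete multipartite with all parts of size $\le n$.

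Now suppose $\delta(G)=n$, which is the heart of the argument. Fix $a_0$ with $d_G(a_0)=n$ and put $A=\{a_0\}\cup\overline{N}_G(a_0)$, $B=N_G(a_0)$, so $|A|=|B|=n$. For any perfect matching $M'$, the reformulation applied to $v=a_0$ forces $\overline{N}_G(a_0)$ — and also $a_0$ — to be matched into $B$; since $|A|=|B|=n$, $M'$ is a perfect matching between $A$ and $B$. In particular, no perfect matching of $G$ uses an edge inside $A$ or inside $B$. Consider the bipartite graph $G[A,B]$, which has exactly the perfect matchings of $G$. If $f(G[A,B])\le n-2$, there is a perfect matching $M$ and two $M$-edges $a_1b_1,a_2b_2$ (with $a_i\in A$, $b_i\in B$) with $a_1b_2\notin E(G)$ or $a_2b_1\notin E(G)$; since $f(G)=n-1$, the vertices $a_1,b_1,a_2,b_2$ must still carry an $M$-alternating $4$-cycle in $G$, which forces $a_1a_2\in E(G)$ and $b_1b_2\in E(G)$, and then $(M\setminus\{a_1b_1,a_2b_2\})\cup\{a_1a_2,b_1b_2\}$ is a perfect matching of $G$ using an edge inside $A$ — impossible. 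Hence $f(G[A,B])=n-1$, so $G[A,B]=K_{n,n}$ by Theorem~\ref{thm1}. Finally, if $G$ had an edge inside $A$ and an edge inside $B$, then picking a perfect matching of $K_{n,n}$ through the four relevant vertices and performing the same swap again yields a perfect matching of $G$ using an edge inside a part — impossible. So every edge of $G$ outside the spanning $K_{n,n}$ lies in one fixed part, i.e.\ $G$ is $K_{n,n}$ with additional edges in a single partite set.

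For the converse, if $G$ is complete multipartite with all parts of size $\le n$ it has a perfect matching, and any two disjoint $M$-edges $ab,cd$ span an $M$-alternating $4$-cycle — were both completions blocked, two endpoints of one of $ab$, $cd$ would lie in a common part — so $f(G,M)\ge n-1$ for every $M$ and $f(G)=n-1$. If $G$ is $K_{n,n}$ on $U\cup V$ with extra edges inside $U$, every perfect matching of $G$ matches the independent set $V$ into $U$ and so is a perfect matching of $K_{n,n}$; again any two of its edges span a $4$-cycle, giving $f(G)=n-1$. The main obstacle is the case $\delta(G)=n$: one must first see that every perfect matching runs between $A$ and $B$, and then bootstrap via Theorem~\ref{thm1} together with the swapping argument to eliminate chords in both parts; the boolean reformulation and the case $\delta(G)\ge n+1$ are routine.
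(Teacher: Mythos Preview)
The paper does not actually prove Theorem~\ref{thm7}; it is quoted from the authors' companion preprint~\cite{47} and stated without argument. There is therefore no in-paper proof to compare against.

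That said, your argument is correct and self-contained. The reduction to the local criterion---$f(G)=n-1$ iff for every perfect matching $M$ and every vertex $v$ the set $\overline{N}_G(v)$ contains no $M$-edge---is valid: the Boolean expansion of ``no $M$-alternating $4$-cycle on $\{a,b,c,d\}$'' is exactly the four clauses you list, each asserting that one endpoint misses both endpoints of the other $M$-edge. The degree bound $\delta(G)\ge n$ follows since $\overline{N}_G(v)$ is matched injectively into $N_G(v)$. In the case $\delta(G)\ge n+1$, your use of the $\delta\ge |V|/2$ perfect-matching criterion on $G-x-z$ correctly forces $\overline G$ to be $P_3$-free, hence $G$ complete multipartite. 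In the case $\delta(G)=n$, your observation that every perfect matching lies between $A=\{a_0\}\cup\overline N_G(a_0)$ and $B=N_G(a_0)$ is the key structural step; bootstrapping through Theorem~\ref{thm1} on $G[A,B]$ and the swap $(M\setminus\{a_1b_1,a_2b_2\})\cup\{a_1a_2,b_1b_2\}$ cleanly rules out chords on both sides. The converse is routine and your case analysis for complete multipartite graphs is correct.

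One cosmetic remark: you invoke Dirac/$\delta\ge |V|/2$ implicitly for the existence of a perfect matching in $G-x-z$ and in the converse for complete multipartite graphs; it would be worth naming this explicitly, but the mathematics is sound.
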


In this section, we will determine all bipartite graphs $G$ of order $2n$ and with $f(G)=n-2$ for $n\geq 2$.
For an edge subset $S$ of $G$, we write $G-S$ for the subgraph of $G$ obtained by deleting the edges in $S$.
Let $F_0$ be a bipartite graph which contains exactly one edge and each partite set has exactly two vertices. A bipartite graph $G$ is \emph{$F_0$-free} (resp. $P_4$-free) if it contains no induced subgraph isomorphic to $F_0$ (resp. $P_4$), where the two partite sets of the induced subgraph have the same sizes.
\begin{lem}\label{l6}Let $G=(U,V)$ be a bipartite graph. Then $G$ is $F_0$-free if and only if
  $G$ can be obtained from $K_{|U|,|V|}$ by deleting all edges of some disjoint complete bipartite subgraphs.
 \end{lem}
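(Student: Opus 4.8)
The plan is to pass to the bipartite complement of $G$, which converts the $F_0$-free condition into the absence of an induced $P_4$. Write $\overline{G}$ for the bipartite graph on the same parts $U$ and $V$ in which $u\in U$ and $v\in V$ are adjacent exactly when $uv\notin E(G)$. First I would record the elementary bookkeeping: if $H_1,\dots,H_t$ are pairwise vertex-disjoint complete bipartite subgraphs of $K_{|U|,|V|}$, then deleting all their edges from $K_{|U|,|V|}$ yields precisely the graph whose bipartite complement is $H_1\cup\dots\cup H_t$ (the vertices lying in no $H_i$ become isolated in the complement and contribute no deleted edge), and conversely every graph whose bipartite complement is a vertex-disjoint union of complete bipartite graphs arises in this way. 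Hence the right-hand side of the lemma is equivalent to the assertion that every connected component of $\overline{G}$ is complete bipartite (isolated vertices permitted).

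Next I would translate the left-hand side. Since $F_0$ is the bipartite graph on parts of sizes $2$ and $2$ with exactly one edge, a pair of vertices of $U$ together with a pair of vertices of $V$ induces a copy of $F_0$ in $G$ if and only if it spans exactly one edge of $G$, equivalently exactly three edges of $\overline{G}$; and a bipartite graph on parts of sizes $2$ and $2$ with three edges is a path $P_4$. Conversely every induced $P_4$ of $\overline{G}$ automatically occupies two vertices of $U$ and two of $V$, because $\overline{G}$ is bipartite. Thus $G$ is $F_0$-free if and only if $\overline{G}$ contains no induced $P_4$.

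It then remains to establish the folklore fact that a bipartite graph $H$ contains no induced $P_4$ if and only if each of its connected components is complete bipartite. One direction is immediate: an induced $P_4$ is connected, while an induced subgraph of a complete bipartite graph is again complete bipartite, hence never a $P_4$. For the converse I would argue by contradiction: if some component is not complete bipartite, pick vertices $a,b$ in its two parts with $ab\notin E(H)$ and take a shortest $a$--$b$ path; its length is odd (bipartiteness) and differs from $1$, hence is at least $3$, and the first four vertices $x_0,x_1,x_2,x_3$ of a shortest path induce exactly a $P_4$, since no two vertices at distance at least $2$ along a shortest path are adjacent. Chaining the three reductions proves the lemma. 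I do not expect a genuine obstacle here; the only point demanding care is keeping the ``balanced induced subgraph'' convention in the definition of $F_0$-free aligned with plain induced $P_4$'s in $\overline{G}$, which is legitimate precisely because $\overline{G}$ is bipartite.
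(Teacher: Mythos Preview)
Your proof is correct and follows essentially the same route as the paper: both pass to the bipartite complement $G'=K_{|U|,|V|}-E(G)$, observe that $G$ is $F_0$-free iff $G'$ is $P_4$-free, and then show that a $P_4$-free bipartite graph has only complete bipartite components. The only cosmetic differences are that the paper argues the sufficiency direction directly (without naming the complement) and proves the $P_4$-free characterisation by induction on the component order rather than via your shortest-path shortcut argument.
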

 \begin{proof}Sufficiency. For a pair of vertices $u\in U$ and $v\in V$, we have $uv\notin E(G)$ if and only if $u$ and $v$ lie in the same complete bipartite subgraph deleted edges of $K_{|U|,|V|}$. Suppose to the contrary that $G$ contains an induced subgraph $H$ isomorphic to $F_0$. Without loss of generality, we may suppose that $V(H)=\{u_1,u_2,v_1,v_2\}$ and $u_1v_1$ is the edge of $H$. Then these three pairs of vertices $\{u_1,v_2\}$, $\{u_2,v_1\}$ and $\{u_2,v_2\}$ are in the same complete bipartite subgraphs deleted edges of $K_{|U|,|V|}$, respectively. Hence the four vertices $u_1,u_2,v_1$ and $v_2$ lie in the same complete bipartite subgraph deleted edges of $K_{|U|,|V|}$, which contradicts that $u_1v_1$ is an edge of $G$.

 Necessity. Let $G'=K_{|U|,|V|}-E(G)$. Then $G$ and $G'$ are bipartite spanning subgraphs of $K_{|U|,|V|}$. It is obvious that $G$ is $F_0$-free if and only if $G'$ is $P_4$-free. It suffices to prove that every component of $G'$ with at least two vertices is a complete bipartite graph, and let $C'$ be such a component with bipartition $\{u_1,u_2,\dots,u_i\}\cup\{v_1,v_2,\dots,v_j\}$.

We will proceed by induction on $|V(C')|$. If $i=1$ or $j=1$, then we have done. So let $i\geq 2$ and $j\geq 2$.
Then there exists a vertex $x$ of $C'$ such that $C'-x$ is connected. This is verified by choosing $x$ as an end-vertex of a longest path of $C'$. Without loss of generality, we may assume that $x=u_i$. Since $C'-u_i$ is $P_4$-free,  $C'-u_i$ is isomorphic to $K_{i-1,j}$ by the induction hypothesis. Since $C'$ is connected, there exists $1\leq k\leq j$ such that $u_iv_k\in E(C')$. Since $\{u_1v_k,u_1v_{l}\}\subseteq E(C')$ for any $1\leq l\leq j$ and $l\neq k$ and $C'$ is $P_4$-free, we obtain that $u_iv_{l}\in E(C')$. So $C'$ is a complete bipartite graph $K_{i,j}$.
\end{proof}

If $G$ is a graph obtained from $K_{n,n}$ by deleting all edges of some disjoint complete bipartite subgraphs, then we call these disjoint complete bipartite subgraphs \emph{deleted subgraphs of $G$}. Naturally, we assume that each deleted subgraph contains at least one vertex of each partite set of $K_{n,n}$. Also, we say that a graph is obtained from $K_{n,n}$ by such operations, we mean that the graph is not $K_{n,n}$.

The independence number of $G$ is denoted by $\alpha(G)$. An equivalent condition of bipartite graphs with a perfect matching is given below. (see Exercise 3.1.40 in \cite{36}).
\begin{lem}\cite{36}\label{ll3} Let $G$ be a bipartite graph of order $2n$. Then $\alpha(G)=n$ if and only if $G$ has a perfect matching.
 \end{lem}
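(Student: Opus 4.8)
The plan is to deduce this equivalence from K\"onig's theorem (in a bipartite graph the size of a maximum matching equals the size of a minimum vertex cover), together with the elementary fact that the complement of a vertex cover is an independent set. I would first record that one inequality is automatic: writing the bipartition of $G$ as $(X,Y)$ with $|X|+|Y|=2n$, at least one of $X$ and $Y$ has at least $n$ vertices, and both are independent sets of $G$, so $\alpha(G)\ge n$ always holds. Hence the hypothesis $\alpha(G)=n$ is equivalent to the assertion that $G$ has no independent set of size larger than $n$.

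The easy implication comes next: if $G$ has a perfect matching $M$, then any independent set $I$ of $G$ contains at most one end-vertex of each edge of $M$, because the two ends of an edge of $M$ are adjacent; thus $|I|\le|M|=n$, and combined with $\alpha(G)\ge n$ this forces $\alpha(G)=n$. (This direction uses no bipartiteness at all.)

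For the converse I would argue by contraposition. Assume $G$ has no perfect matching; since $|V(G)|=2n$, a maximum matching of $G$ has at most $n-1$ edges, so by K\"onig's theorem $G$ admits a vertex cover $C$ with $|C|\le n-1$. As every edge of $G$ has an end-vertex in $C$, no edge of $G$ lies inside $V(G)\setminus C$, so $V(G)\setminus C$ is an independent set, of size $2n-|C|\ge n+1$; hence $\alpha(G)\ge n+1$, i.e. $\alpha(G)\ne n$. This completes the argument. There is essentially no obstacle: the only non-elementary input is K\"onig's theorem for bipartite graphs, which is classical, and this is exactly why the paper cites \cite{36} for this lemma; if one preferred to avoid it, the converse could instead be obtained from Hall's theorem in its deficiency form, but the vertex-cover route is shorter.
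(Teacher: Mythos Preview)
Your argument is correct in every detail: the inequality $\alpha(G)\ge n$ from the larger side of the bipartition, the matching bound $|I|\le|M|$ for one direction, and the K\"onig--cover complement for the other are exactly the standard ingredients. Note that the paper does not supply its own proof of this lemma---it is quoted as Exercise~3.1.40 of \cite{36} and used as a black box---so there is no in-paper argument to compare against; your K\"onig-based proof is the textbook one that \cite{36} has in mind.
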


An edge $e$ of $G$ is \emph{allowed} if it lies in some perfect matching of $G$ and \emph{forbidden} otherwise.
A graph is said to be \emph{elementary} if its allowed edges form a connected subgraph. Hetyei obtained the following result (see Theorem 1 in \cite{L15}).
\begin{lem}\label{ll4}\cite{L15} A bipartite graph is elementary if and only if it is connected and every edge is allowed.
\end{lem}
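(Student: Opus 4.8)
The plan is to dispose of the easy implication and then to extract a tight Hall set from a hypothetical forbidden edge. The ``if'' direction is immediate: if $G$ is connected and every edge of $G$ is allowed, then the subgraph spanned by the allowed edges is all of $G$, hence connected, so $G$ is elementary. (If $G$ has no perfect matching this subgraph is edgeless, so ``elementary'' forces $G$ to have at most one vertex, where both sides of the equivalence hold trivially; from now on I assume $G$ has a perfect matching $M$ and bipartition $(X,Y)$ with $|X|=|Y|=n$, and I write $C$ for the subgraph spanned by the allowed edges.)

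For the ``only if'' direction, observe first that $C$ is a \emph{spanning} subgraph of $G$: every vertex is covered by $M$ and every edge of $M$ is allowed. Hence $C$ connected already forces $G$ connected, and it remains only to rule out forbidden edges. The two facts I would use are: (i) an edge $e=xy\in E(G)$ is allowed if and only if $G-x-y$ has a perfect matching (delete $e$ from, or add it to, a perfect matching); and (ii) Hall's theorem, available since $G$ has a perfect matching. Suppose for contradiction that $e=uv$, with $u\in X$ and $v\in Y$, is forbidden. By (i), $G-u-v$ has no perfect matching, so, applying Hall to its two equal-size partite sets, there is a set $S$ with $\emptyset\ne S\subseteq X\setminus\{u\}$ and $|N_{G-u-v}(S)|<|S|$, that is, $|N_G(S)\setminus\{v\}|<|S|$. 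Since $|N_G(S)|\ge|S|$ by Hall in $G$, this forces $v\in N_G(S)$ and $|N_G(S)|=|S|$. Put $T:=N_G(S)$; then $\emptyset\ne S\subsetneq X$ is a tight Hall set with $v\in T$ and $u\notin S$.

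The tight-set argument then closes the proof. Since $T$ is exactly the neighbourhood of $S$ and $|T|=|S|$, in every perfect matching $M'$ of $G$ the set $S$ is matched bijectively onto $T$; hence $M'$ uses no edge from $X\setminus S$ to $T$, and $G$ has no edge at all from $S$ to $Y\setminus T$. Consequently every perfect matching of $G$ decomposes into a perfect matching of $G[S\cup T]$ plus a perfect matching of $G[(X\setminus S)\cup(Y\setminus T)]$, so every edge of $G$ joining $X\setminus S$ to $T$ is forbidden; this set of edges is non-empty, since it contains $uv$. Therefore $C$ has no edge joining $S\cup T$ to $(X\setminus S)\cup(Y\setminus T)$, and since both of these vertex sets are non-empty ($S\ne\emptyset$ and $u\in X\setminus S$) while $C$ is spanning, $C$ is disconnected --- contradicting that $G$ is elementary. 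Hence every edge of $G$ is allowed, and with $G$ already known to be connected the proof is complete.

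The main obstacle is the second paragraph: turning ``$uv$ is forbidden'' into a \emph{proper} tight Hall set, which needs fact (i) together with a little care comparing $N_G(S)$ with $N_{G-u-v}(S)$. Once that set is available, the tight-set splitting of perfect matchings and the resulting disconnection of $C$ are routine. (An alternative way to produce the same set is to orient the edges of $M$ from $Y$ to $X$ and all other edges from $X$ to $Y$, and let $S\cup T$ be the vertices reachable from $v$ in this orientation; the $G-u-v$ and Hall route is, however, the shortest.)
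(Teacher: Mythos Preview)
Your proof is correct. Note, however, that the paper does not supply its own proof of this lemma: it is quoted as a classical result attributed to Hetyei (see the sentence preceding the lemma, ``Hetyei obtained the following result (see Theorem 1 in \cite{L15})''), and no argument is given in the paper. So there is nothing to compare against; your write-up simply fills in a proof that the paper takes for granted.

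For what it is worth, the argument you give is the standard one. The only delicate step is exactly where you flag it: extracting from a forbidden edge $uv$ a proper tight set $S\subsetneq X$ with $v\in N_G(S)$ and $u\notin S$. Your use of Hall on $G-u-v$ together with Hall on $G$ to pin down $|N_G(S)|=|S|$ and $v\in N_G(S)$ is clean, and the subsequent observation that every perfect matching must match $S$ onto $T=N_G(S)$, hence every $(X\setminus S)$--$T$ edge is forbidden, disconnects the allowed-edge subgraph $C$ across the cut $\bigl(S\cup T,\ (X\setminus S)\cup(Y\setminus T)\bigr)$ exactly as required. The handling of degenerate cases (no perfect matching, one vertex) is adequate for the purposes of this paper, where the lemma is only applied to graphs already known to have perfect matchings.
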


Let $\mathcal{G}_1$ be the set of all graphs obtained from $K_{n,n}$ by deleting all edges of some disjoint complete bipartite subgraphs and the orders of its deleted subgraphs are no more than $n$, and $\mathcal{G}_2$ be the set of all bipartite graphs of order $2n$ consisting of two complete bipartite graphs with perfect matchings and some forbidden edges between them (see Fig. \ref{fpp111}).
\begin{figure}[h]
\centering
\includegraphics[height=2.5cm,width=15cm]{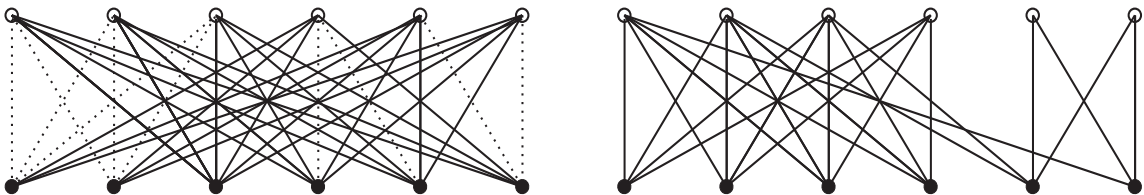}
\caption{\label{fpp111}A graph in $\mathcal{G}_1$ and a graph in $\mathcal{G}_2$.}
\end{figure}

\begin{thm}\label{thm4}Let $G$ be a bipartite graph of order $2n$ for $n\geq 2$. Then $f(G)=n-2$ if and only if $G$ is a graph in $\mathcal{G}_1$ or $\mathcal{G}_2$.
\end{thm}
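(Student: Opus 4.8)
The plan is to prove both directions, using throughout that by Lemma \ref{l1} the number $f(G,M)$ equals $n$ minus the largest size of a subset $R\subseteq M$ for which $G[V(R)]$ has a unique perfect matching.

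\emph{Sufficiency.} Let $G\in\mathcal G_1$. Then $G\ne K_{n,n}$, so $f(G)\le n-2$ by Theorem \ref{thm1}, and $G$ is $F_0$-free by Lemma \ref{l6}. I would first isolate the auxiliary fact $(\star)$: \emph{no $F_0$-free bipartite graph whose two parts have size $3$ has a unique perfect matching}. Granting $(\star)$, for any perfect matching $M$ of $G$ no $R\subseteq M$ with $|R|\ge3$ can have $G[V(R)]$ uniquely matchable (for then a $3$-subset would too, while $G[V(R)]$, an induced subgraph of $G$ with equal parts, is $F_0$-free); hence $f(G,M)\ge n-2$ and $f(G)=n-2$. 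Now let $G\in\mathcal G_2$, say $G=K_{p,p}\cup K_{q,q}$ ($p+q=n$) together with forbidden edges between the two blocks. Every perfect matching $M$ of $G$ restricts to a perfect matching in each block, so every $R\subseteq M$ with $|R|=3$ contains two edges of one block; those two edges together with the two crossing edges of that block form an $M$-alternating $4$-cycle in $G[V(R)]$, so $G[V(R)]$ is not uniquely matchable, whence $f(G,M)\ge n-2$ and $f(G)\ge n-2$. For the reverse inequality choose $e\in M\cap E(K_{p,p})$ and $e'\in M\cap E(K_{q,q})$; the two crossing pairs on $V(e)\cup V(e')$ cannot both be edges of $G$, for otherwise they would yield an $M$-alternating $4$-cycle and hence a perfect matching through a forbidden edge, a contradiction; so $G[V(e)\cup V(e')]\ne K_{2,2}$ is uniquely matchable, $M\setminus\{e,e'\}$ is a forcing set, and $f(G)\le n-2$.

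\emph{Necessity.} Assume $f(G)=n-2$; then $G\ne K_{n,n}$ by Theorem \ref{thm1}. I would decompose $G$ into its elementary components $C_1,\dots,C_r$ (the components of the spanning subgraph of allowed edges of $G$); by the structure theory of elementary bipartite graphs (cf.\ Lemma \ref{ll4}) each $C_i$ is elementary and carries a perfect matching, edges between distinct $C_i,C_j$ are forbidden, and every alternating cycle of $G$, for any perfect matching, lies inside a single $C_i$. Consequently $S\subseteq M$ forces $M$ iff $S\cap E(C_i)$ forces $M\cap E(C_i)$ for each $i$, so $f(G,M)=\sum_i f(C_i,M\cap E(C_i))$ and, the blocks being matchable independently, $f(G)=\sum_i f(C_i)$. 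Writing $2n_i=|V(C_i)|$ with $\sum_i n_i=n$ and using $f(C_i)\le n_i-1$, we get $n-2=\sum_i f(C_i)\le\sum_i(n_i-1)=n-r$, so $r\le2$. If $r=2$, then $f(C_1)+f(C_2)=(n_1-1)+(n_2-1)$ forces $f(C_i)=n_i-1$, whence $C_i\cong K_{n_i,n_i}$ by Theorem \ref{thm1}; since the $C_i$ are joined only by forbidden edges, $G\in\mathcal G_2$ (or $G=K_{n_1,n_1}\cup K_{n_2,n_2}\in\mathcal G_1$ if there is no such edge). If $r=1$, then $G$ is elementary with $f(G)=n-2$, and I claim $G$ is $F_0$-free: otherwise $G$ has an induced $F_0$ on $\{u_1,u_2,v_1,v_2\}$ whose only edge is $u_1v_1$; as $G$ is elementary $u_1v_1$ lies in a perfect matching $M$, and letting $v_3,u_3$ be the $M$-partners of $u_2,v_2$, the non-edges $u_1v_2,u_2v_1,u_2v_2$ make the six vertices distinct and force $R:=\{u_1v_1,u_2v_3,u_3v_2\}$ to be the unique perfect matching of $G[V(R)]$ (there $u_2$ is adjacent only to $v_3$, then $u_1$ only to $v_1$), so $f(G)\le f(G,M)\le n-3$, a contradiction. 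Hence $G$ is $F_0$-free, Lemma \ref{l6} gives $G=K_{n,n}-\bigcup_i E(B_i)$, and since $G$ has a perfect matching $\alpha(G)=n$ by Lemma \ref{ll3}, so each $B_i$ — whose two sides together form an independent set of $G$ — has order at most $n$; that is, $G\in\mathcal G_1$.

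Finally, $(\star)$: if $H$ were $F_0$-free on parts $\{u_1,u_2,u_3\},\{v_1,v_2,v_3\}$ with unique perfect matching $\{u_1v_1,u_2v_2,u_3v_3\}$, then by Theorem \ref{l3} some vertex of the first part, say $u_1$ after relabelling, is pendant, so $u_1\sim v_1$ only; deleting $u_1,v_1$ leaves a $2\times2$ graph with a unique perfect matching, hence $\ne K_{2,2}$, so after relabelling $u_2v_3\notin E(H)$; but then $\{u_1,u_2,v_2,v_3\}$ induces exactly the edge $u_2v_2$, an $F_0$, a contradiction. No single step is a serious obstacle; the care lies in setting up the elementary-component identity $f(G)=\sum_i f(C_i)$ and in checking that the two families emerging from $r=2$ and $r=1$ are exactly $\mathcal G_2$ and $\mathcal G_1$ as defined — in particular the ``order $\le n$'' bound on the deleted subgraphs, which is precisely what the existence of a perfect matching provides through $\alpha(G)=n$.
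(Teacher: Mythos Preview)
Your proof is correct and follows essentially the same approach as the paper's: both hinge on Lemma~\ref{l6} (the $F_0$-free characterization), the observation that an induced $F_0$ whose single edge lies in a perfect matching yields three $M$-edges spanning a uniquely matchable induced subgraph, and the elementary-component decomposition with the identity $f(G)=\sum_i f(C_i)$. The paper organizes the necessity by first splitting on whether $G$ is $F_0$-free (leading to $\mathcal G_1$) versus not (leading, via non-elementarity, to $\mathcal G_2$), whereas you split first on the number $r$ of elementary components; these are just two orderings of the same argument, and your $(\star)$ is exactly the paper's use of Theorem~\ref{l3} to locate pendant vertices in a $3\times 3$ uniquely matchable bipartite graph and exhibit an induced $F_0$.

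One small omission: in the sufficiency for $\mathcal G_1$ you invoke Theorem~\ref{thm1} to get $f(G)\le n-2$, but you never verify that $G\in\mathcal G_1$ actually has a perfect matching. The paper checks this explicitly via Lemma~\ref{ll3}: the ``order $\le n$'' constraint on the deleted subgraphs gives $\alpha(G)=n$, hence a perfect matching. You use precisely this argument in the necessity direction, so you clearly know it; just move (or copy) that sentence into the sufficiency paragraph. Your explicit construction of a size-$(n-2)$ forcing set for $G\in\mathcal G_2$ is correct but unnecessary, since $G\ne K_{n,n}$ and Theorem~\ref{thm1} already give $f(G)\le n-2$.
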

\begin{proof}Sufficiency. First we prove that a graph $G\in\mathcal{G}_1$ has a perfect matching. Since the orders of deleted subgraphs of $G$ are no more than $n$, we have $\alpha(G)=n$. By Lemma \ref{ll3}, $G$ has a perfect matching.
 Let $G$ be a graph in $\mathcal{G}_1$ or $\mathcal{G}_2$. By Theorem \ref{thm1}, $f(G)\leq n-2$ since $G$ is not $K_{n,n}$. Next we will prove that $f(G)\geq n-2$.

Suppose that $G$ is a graph in $\mathcal{G}_1$. For $n=2$, we have $f(G)=0$ and the theorem holds. Let $n\geq 3$. Suppose to the contrary that $f(G)\leq n-3$. Then there exists a perfect matching $M$ of $G$ and a minimum forcing set $S$ of $M$ such that $|S|=f(G,M)=f(G)$. By Lemma \ref{l1}, $G-V(S)$ has a unique perfect matching.
So there are three distinct edges $\{e_1,e_2,e_3\}\subseteq M\setminus S$ such that $G[V(\{e_1,e_2,e_3\})]$ has a unique perfect matching. Set $e_i=u_iv_i$ for $1\leq i\leq 3$. By Theorem \ref{l3}, $G[V(\{e_1,e_2,e_3\})]$ contains two pendant vertices and we may assume such two vertices are $u_1$ and $v_3$.
Then $G[\{u_1,v_2,u_2,v_3\}]$ is isomorphic to $F_0$, which contradicts Lemma \ref{l6}.

Suppose that $G$ is a graph in $\mathcal{G}_2$. We denote by $G_1$ and $G_2$ the two complete bipartite subgraphs of $G$ with perfect matchings. Then $M\cap E(G_i)$ is a perfect matching of $G_i$ for any perfect matching $M$ of $G$ where $i\in \{1,2\}$.
 For any subset $S$ of $M$ such that $|S|\leq n-3$, $G-V(S)$ contains three edges of $M$ and two of them lie in some complete bipartite subgraph, say $G_1$. Then $G-V(S)$ contains an $M$-alternating cycle in $G_1$. By Lemma \ref{l1}, $S$ is not a forcing set of $M$. Thus, $f(G,M)\geq n-2$. By the arbitrariness of $M$, we have $f(G)\geq n-2$.

 Necessity. Since $f(G)=n-2$, $G$ has a perfect matching and each partite set has $n$ vertices. By Theorem \ref{thm1}, $G$ is not $K_{n,n}$. If $G$ is $F_0$-free, then $G$ is a graph obtained from $K_{n,n}$ by deleting all edges of some disjoint complete bipartite subgraphs by Lemma \ref{l6}. Since $G$ has a perfect matching, the orders of its deleted subgraphs are no more than $n$. So $G$ is a graph in $\mathcal{G}_1$. If $G$ is not $F_0$-free, then $G$ contains an induced subgraph $H$ isomorphic to $F_0$ and $n\geq 3$. We claim that the edge $e$ of $H$ is a forbidden edge in $G$. Otherwise, there exists a perfect matching $M$ of $G$ containing $e$. Let $\{e,e',e''\}$ be the three distinct edges of $M$ incident with the vertices of $H$. Then $G[V(\{e,e',e''\})]$ contains no $M$-alternating cycles. By Lemma \ref{l1}, $M\setminus \{e,e',e''\}$ is a forcing set of $M$. So $f(G)\leq f(G,M)\leq n-3$, which is a contradiction.
 So the claim holds, and $G$ is not elementary by Lemma \ref{ll4}.

The subgraph of $G$ consisting of all allowed edges in $G$ and their end-vertices has components, say, $L_1, L_2,\dots,$ $L_k$ where $k\geq 2$.
Then two end-vertices of any forbidden edge of $G$ lie in different components.
If not, there exists a forbidden edge $e$ of $G$ whose two end-vertices belong to some component $L_i$. Let $L_i'$ be a graph obtained from $L_i$ by adding the edge $e$. Then $e$ is also a forbidden edge of $L_i'$, which contradicts Lemma \ref{ll4}. Hence all edges between distinct components are precisely forbidden edges of $G$.
 Thus,
\begin{eqnarray}
n-2=f(G)=\sum_{i=1}^kf(L_i)\leq \sum_{i=1}^k (\frac{|V(L_i)|}{2}-1)=\frac{1}{2}\sum_{i=1}^k|V(L_i)|-k=n-k,\label{ie2}
\end{eqnarray}
which implies that $k\leq 2$. So $k=2$ and all equalities in (\ref{ie2}) hold. So $f(L_i)=\frac{|V(L_i)|}{2}-1$ for $i=1$ and 2. By Theorem \ref{thm1}, $L_1$ and $L_2$ are two complete bipartite graphs. Hence $G$ is a graph in $\mathcal{G}_2$.
\end{proof}

Let $G$ be a bipartite graph of order $2n$ and with $F(G)=n-1$. By Theorem \ref{p7}, $e(G)= n^2$ and $G$ is $K_{n,n}$. Combining Theorem \ref{thm1}, we obtain the following result.
\begin{remark}{\rm Let $G$ be a bipartite graph of order $2n$ for $n\geq 2$. Then  $f(G)=n-2$ if and only if each perfect matching of $G$ has the forcing number $n-2$.}
\end{remark}
\begin{remark}\label{rm7} {\rm Let $G$ be a graph in $\mathcal{G}_1$ or $\mathcal{G}_2$. Then $G$ is disconnected if and only if $G$ is the disjoint union of two complete bipartite graphs with perfect matchings, i.e., there are exactly two deleted subgraphs and their orders are $n$.

It suffices to prove the necessity. Since $G$ is disconnected, $G$ has at least two components, say $L_1,L_2,\dots,L_k$ where $k\geq 2$. By Theorem \ref{thm4}, $f(G)=n-2$ and all equalities in (\ref{ie2}) hold. By the same arguments as the proof of Theorem \ref{thm4}, we obtain that $k=2$, and $L_1$ and $L_2$ are two complete bipartite graphs.}
\end{remark}
Next we will determine all elementary bipartite graphs in $\mathcal{G}_2$.
\begin{prop}Let $G$ be a graph in $\mathcal{G}_1$.
 Then $G$ is elementary if and only if each deleted subgraph of $G$ has order less than $n$.
\end{prop}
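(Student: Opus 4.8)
The plan is to translate ``$G$ is elementary'' into the two conditions of Lemma \ref{ll4} (connectedness and every edge being allowed) and then to control allowedness of an edge through the independence number via Lemma \ref{ll3}. Throughout I write $U\cup V$ for the bipartition of the underlying $K_{n,n}$ and let $B_1,\dots,B_t$ be the deleted subgraphs of $G$, where $B_j$ has bipartition $X_j\subseteq U$ and $Y_j\subseteq V$ with $|X_j|,|Y_j|\ge 1$; recall the $B_j$ are pairwise vertex-disjoint. The routine preliminary I would record first is that, for $u\in U$ and $v\in V$, one has $uv\notin E(G)$ precisely when $u,v$ lie in a common $B_j$, and more generally that any independent set $I$ of $G$ with $I\cap U\ne\emptyset\ne I\cap V$ satisfies $(I\cap U)\cup(I\cap V)\subseteq V(B_j)$ for a single $j$: choosing $u_0\in I\cap U$, since $I\cap V\ne\emptyset$ the vertex $u_0$ must lie in the (unique) $B_j$ containing it, and then every other vertex of $I$ is forced into that same $B_j$ because the deleted subgraphs are disjoint. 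Consequently $\alpha(G-u-v)=\max\bigl(n-1,\,\max_j|(X_j\setminus\{u\})\cup(Y_j\setminus\{v\})|\bigr)$ whenever $uv\in E(G)$.

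For the direction ``all orders $<n$ $\Rightarrow$ $G$ elementary'': connectedness is immediate from Remark \ref{rm7}, since a disconnected member of $\mathcal{G}_1$ must have exactly two deleted subgraphs, each of order $n$. For allowedness, fix any $uv\in E(G)$. Every term in the formula above satisfies $|(X_j\setminus\{u\})\cup(Y_j\setminus\{v\})|\le|X_j|+|Y_j|\le n-1$, so $\alpha(G-u-v)=n-1$, hence $G-u-v$ has a perfect matching by Lemma \ref{ll3} and $uv$ is allowed. Thus $G$ is elementary by Lemma \ref{ll4}.

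For the converse I would argue by contraposition: assume some deleted subgraph $B$ has order $n$ (by definition of $\mathcal{G}_1$ the orders are at most $n$, so ``not all $<n$'' means exactly this), with parts $X\subseteq U$, $Y\subseteq V$, so that $|X|+|Y|=n$, whence $|V\setminus Y|=|X|\ge 1$ and $|U\setminus X|=|Y|\ge 1$. The crux is a rigidity property of the perfect matchings: in any perfect matching $M$ of $G$, each vertex of $X$ can only be matched into $V\setminus Y$ (these are its only neighbours), and since $|X|=|V\setminus Y|$ this forces $M$ to match $X$ bijectively onto $V\setminus Y$; in particular no vertex of $V\setminus Y$ is matched by $M$ to a vertex of $U\setminus X$. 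Hence every edge of $G$ joining $U\setminus X$ to $V\setminus Y$ is forbidden. Now split into two cases. If $E(U\setminus X,V\setminus Y)\ne\emptyset$, then $G$ has a forbidden edge, so $G$ is not elementary by Lemma \ref{ll4}. If $E(U\setminus X,V\setminus Y)=\emptyset$, then $X\cup(V\setminus Y)$ is a nonempty proper subset of $V(G)$ (it misses the nonempty set $Y$) with no edge leaving it, because $X$ is non-adjacent to $Y$ and $V\setminus Y$ is non-adjacent to $U\setminus X$ by assumption; thus $G$ is disconnected and again not elementary. In either case $G$ fails to be elementary, which completes the contrapositive.

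I expect the only slightly delicate point to be making precise the independence-number computation for $G-u-v$, i.e. the lemma-level fact that ``no cross edges between two vertex sets, one in $U$ and one in $V$'' forces those sets into a single deleted subgraph; once that is in hand, the easy direction is a one-line inequality and the hard direction is the short matching-rigidity argument together with the two-case split above.
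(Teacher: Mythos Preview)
Your proof is correct. For the forward direction it coincides with the paper's: Remark~\ref{rm7} for connectedness, then $\alpha(G-u-v)=n-1$ via Lemma~\ref{ll3} and Lemma~\ref{ll4}; your preliminary observation that any independent set meeting both parts lies inside a single $B_j$ just makes explicit what the paper uses implicitly when it says $G-V(e)$ is again a $K_{n-1,n-1}$ minus disjoint complete bipartite subgraphs of order at most $n-1$.

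For the converse the two arguments differ slightly in organisation. The paper proceeds by contradiction: assuming $G$ elementary (hence connected) and some deleted subgraph $K_{i,n-i}$ of order~$n$, Remark~\ref{rm7} forces an edge $e$ in $G-V(K_{i,n-i})$, and then $V(K_{i,n-i})$ is an independent $n$-set in $G-V(e)$, so $\alpha(G-V(e))\ge n$ and $e$ is forbidden by Lemma~\ref{ll3}. Your contrapositive identifies exactly the same candidate forbidden edges (those between $U\setminus X$ and $V\setminus Y$, i.e.\ the edges of $G-V(B)$), but certifies them forbidden via a matching-rigidity count ($X$ must saturate $V\setminus Y$) rather than the independence-number route; the additional case where no such edge exists is precisely what the paper sidesteps by having connectedness in hand from the start. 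The two approaches are equivalent in strength and length.
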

\begin{proof}Sufficiency. By Remark \ref{rm7}, $G$ is connected. For an edge $e$ of $G$, let $G'=G-V(e)$. Then $G'$ is a graph obtained from $K_{n-1,n-1}$ by deleting all edges of some  disjoint complete bipartite subgraphs and the orders of its deleted subgraphs
(if exists) are no more than $n-1$. So $\alpha(G')=n-1$. By Lemma \ref{ll3}, $G'$ has a perfect matching $M'$, and $M'\cup\{e\}$ is a perfect matching of $G$. Hence $e$ is allowed. By Lemma \ref{ll4}, $G$ is elementary.

Necessity. Since $G$ is elementary, it has a perfect matching. So each deleted  subgraph of $G$ has order no more than $n$. Suppose to the contrary that $K_{i,n-i}$ is a deleted subgraph of $G$ with $1\leq i\leq n-1$. Since $G$ is connected, the remaining $n$ vertices can not form another deleted subgraph of $G$ by Remark \ref{rm7}. So the orders of other deleted subgraphs of $G$ (if exists) are no more than $n-1$. Hence $G-V(K_{i,n-i})$ contains at least one edge, say $e$. Since $V(K_{i,n-i})$ forms an independent set of $G$ with cardinality $n$, $e$ is not allowed, which contradicts Lemma \ref{ll4}.
\end{proof}

\section{\normalsize Problems and conjectures}
Let $G$ be a graph of order $2n$ and with a perfect matching. By Theorem \ref{p7}, we obtain that $e(G)\geq \frac{n(n+1)}{n-F(G)}-F(G)-1$. But plenty of examples imply that this bound is not good enough. Since  $$\frac{n^2}{n-F(G)}-[\frac{n(n+1)}{n-F(G)}-F(G)-1]=\frac{F(G)[n-1-F(G)]}{n-F(G)}
\geq 0,$$ and equality holds if and only if $F(G)=0$ or $n-1$.
So we give a conjecture as follows.
\begin{conj}\label{conj}Let $G$ be a graph of order $2n$ and with a perfect matching. Then $e(G)\geq \frac{n^2}{n-F(G)}$. Equivalently, $F(G)\leq \frac{ne(G)-n^2}{e(G)}$.
\end{conj}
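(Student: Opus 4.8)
The plan is to attack this by induction on $n$, refining the scheme used for Theorem \ref{p7}; the extremal examples to keep in mind are the disjoint unions $K_{m_1,m_1}\cup\cdots\cup K_{m_t,m_t}$ with $\sum_i m_i=n$, for which $e(G)=\sum_i m_i^2$, $F(G)=n-t$, and $e(G)(n-F(G))=t\sum_i m_i^2\ge(\sum_i m_i)^2=n^2$ by Cauchy--Schwarz, with equality exactly when all $m_i$ are equal. The base cases $n=1$ and $F(G)=0$ are immediate (in the latter, $e(G)\ge n=n^2/n$). For $1\le k:=F(G)\le n-1$, choose a perfect matching $M$ with $f(G,M)=k$ and, by Lemma \ref{c2}, an edge $uv\in M$ with $d_G(u)+d_G(v)\ge\frac{2n}{n-k}$; set $G'=G-\{u,v\}$, so that $M'=M\setminus\{uv\}$ is a perfect matching of $G'$ and, exactly as in the proof of Theorem \ref{p7}, $k-1\le F(G')\le k$ (and $F(G')\le n-2$ since $|V(G')|=2(n-1)$).

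If $F(G')=k$ — which forces $k\le n-2$ — the induction hypothesis gives $e(G')\ge\frac{(n-1)^2}{n-1-k}$, so
\[
e(G)=e(G')+d_G(u)+d_G(v)-1\ \ge\ \frac{(n-1)^2}{n-1-k}+\frac{2n}{n-k}-1\ \ge\ \frac{n^2}{n-k},
\]
the last step being routine: the difference of the two sides equals $\dfrac{k(2n-2-k)}{(n-1-k)(n-k)}\ge 0$. So this case needs no new idea, and neither does $k=n-1$ (then $n-k=1$, one is forced into $F(G')=n-2$ with $d_G(u)+d_G(v)=2n$, and $e(G)\ge(n-1)^2+2n-1=n^2$).

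The real difficulty is the case $F(G')=k-1$ with $k\le n-2$. Here the induction hypothesis only yields $e(G')\ge\frac{(n-1)^2}{n-k}$, hence $e(G)\ge\frac{(n-1)^2+2n}{n-k}-1=\frac{n^2+1}{n-k}-1$, which falls short of the target $\frac{n^2}{n-k}$ by $\frac{n-k-1}{n-k}$ — essentially one edge once $n-k\ge2$. Worse, this deficit is genuine at the level of Lemma \ref{c2}: for $t\ge2$ copies of $K_{m,m}$ every matching edge $uv$ has $d_G(u)+d_G(v)=2m=\frac{2n}{n-k}$ exactly, yet the bound still holds because $G'\cong(t-1)K_{m,m}\cup K_{m-1,m-1}$ has $e(G')=tm^2-2m+1$, which strictly exceeds its own inductive bound $\frac{(n-1)^2}{n-k}$ by precisely $\frac{n-k-1}{n-k}$. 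This tells us the induction must carry along more than the inequality of Theorem \ref{p7}: one should prove, by the same recursion, a statement that is \emph{tight} on the whole extremal family, of the shape $e(G)\big(n-F(G)\big)\ge n^2+\iota(G)$ for a nonnegative ``imbalance'' functional $\iota$ that restricts to $\sum_{i<j}(m_i-m_j)^2$ on disjoint unions $\bigcup_i K_{m_i,m_i}$ and that cannot decrease by more than a controlled amount under deletion of the pair $\{u,v\}$. Identifying the correct $\iota$ for arbitrary graphs — presumably via some canonical decomposition into matching-covered pieces, as in the bipartite $f(G)=n-2$ analysis of Section 4, reassembled by a Cauchy--Schwarz step — and verifying its behaviour under vertex deletion is, I expect, the main obstacle; once it is in place, the remaining work is the bookkeeping sketched above (carried along with the integer rounding $e(G)\ge\lceil n^2/(n-F(G))\rceil$, which is harmless here and occasionally necessary).
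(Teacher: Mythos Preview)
The statement you are attempting to prove is \emph{Conjecture~\ref{conj}} in the paper --- it is left open, not proved. The paper only establishes the special cases $F(G)\le n/2$ (Proposition~5.2, via the crude bound $F(G)\le\frac{e(G)-n}{2}$) and $F(G)\in\{n-1,n-2\}$ (Proposition~5.3, by exactly the inductive scheme you describe, with integer rounding and the divisibility clause of Lemma~\ref{c2} to handle parity). So there is no ``paper's own proof'' to compare against.

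Your proposal is likewise not a proof, and you say so yourself. The inductive set-up is sound: the bound $k-1\le F(G')\le k$ does hold (the upper bound follows because any perfect matching $M''$ of $G'$ extends to $M''\cup\{uv\}$ in $G$, and a forcing set of the latter restricts to one of the former of no larger size), and your computation in the case $F(G')\ge k$ is correct, as is the $k=n-1$ case. These recover precisely the $F(G)=n-1$ half of the paper's Proposition~5.3. But in the case $F(G')=k-1$ with $k\le n-2$ the naive induction really does fall short by $\frac{n-k-1}{n-k}$, and your proposed remedy --- an imbalance functional $\iota(G)$ satisfying $e(G)(n-F(G))\ge n^2+\iota(G)$ that behaves well under deleting $\{u,v\}$ --- is not defined. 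You are right that such a functional would need to specialise to $\sum_{i<j}(m_i-m_j)^2$ on disjoint unions of balanced complete bipartite graphs, but extending that to arbitrary graphs (where there is no canonical decomposition into ``blocks'' of the right kind, and where deleting $\{u,v\}$ can change the structure in uncontrolled ways) is exactly the open problem. Your sketch is a reasonable diagnosis of why the conjecture resists the method of Theorem~\ref{p7}, but it is not a proof, and the paper does not claim one either.
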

There are some examples showing that Conjecture \ref{conj} holds.
\begin{prop}For $F(G)\leq \frac{n}{2}$, Conjecture \ref{conj} holds.
\end{prop}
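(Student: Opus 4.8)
The plan is to derive everything from Proposition~\ref{p4}, which already states that $F(G)\le\frac{e(G)-n}{2}$ for \emph{every} graph $G$ of order $2n$ with a perfect matching, with no connectivity hypothesis. Rewriting this inequality gives $e(G)\ge 2F(G)+n$, so it suffices to prove the purely arithmetic fact that $2F(G)+n\ge\frac{n^{2}}{n-F(G)}$ whenever $F(G)\le\frac{n}{2}$, and then combine the two.

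Write $k=F(G)$. Since $0\le k\le\frac{n}{2}<n$, we have $n-k>0$, so the desired inequality is equivalent, after multiplying through by $n-k$, to $(2k+n)(n-k)\ge n^{2}$. The left-hand side factors as
\[
(2k+n)(n-k)=n^{2}+kn-2k^{2}=n^{2}+k(n-2k)\ge n^{2},
\]
because $k\ge 0$ and $n-2k\ge 0$ under the hypothesis $k\le\frac{n}{2}$. Dividing back by $n-k>0$ yields $2k+n\ge\frac{n^{2}}{n-k}$, and together with $e(G)\ge 2k+n$ this gives $e(G)\ge\frac{n^{2}}{n-F(G)}$, i.e.\ Conjecture~\ref{conj} holds in this range.

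I expect no serious obstacle here: the entire content is already packed into Proposition~\ref{p4}, and what remains is the one-line factorization $(2k+n)(n-k)=n^{2}+k(n-2k)$ combined with the sign condition $k\le n/2$. The only points worth checking are the boundary cases $k=0$ (then $e(G)\ge n=\frac{n^{2}}{n}$, with equality attained by $nK_{2}$) and, for even $n$, $k=\frac{n}{2}$ (then $n-k=\frac{n}{2}>0$ is still positive, and equality would force $G$ to be $\frac{n}{2}$ disjoint copies of $C_{4}$ by the equality part of Proposition~\ref{p4}); both are immediate. It is also worth remarking that the inequality $e(G)\ge\frac{n^{2}}{n-F(G)}$ is sharp even outside this range, since $K_{n,n}$ attains it with $F(G)=n-1$; thus the restriction $F(G)\le\frac{n}{2}$ is a limitation of this particular argument rather than a feature of the inequality.
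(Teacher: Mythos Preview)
Your proof is correct and follows essentially the same approach as the paper: both invoke Proposition~\ref{p4} to obtain $e(G)\ge n+2F(G)$ and then verify the elementary arithmetic inequality $n+2k\ge\frac{n^{2}}{n-k}$ under the hypothesis $k\le\frac{n}{2}$. The only cosmetic difference is that the paper rewrites the arithmetic step as $\frac{k^{2}}{n-k}\le k$ (and throws in a ceiling since $e(G)$ is an integer), whereas you clear denominators and factor $(2k+n)(n-k)=n^{2}+k(n-2k)$; these are the same computation.
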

\begin{proof}Since $F(G)\leq \frac{n}{2}$, we have $\frac{F(G)^2}{n-F(G)}\leq F(G)$. So $\lceil\frac{F(G)^2}{n-F(G)}\rceil\leq F(G)$. By Proposition \ref{p4}, we have $F(G)\leq \frac{e(G)-n}{2}$. So $e(G)\geq n+2F(G)\geq n+F(G)+\lceil\frac{F(G)^2}{n-F(G)}\rceil=\lceil\frac{n^2}{n-F(G)}\rceil$.
\end{proof}

\begin{prop}\label{ex1}Let $G$ be a graph of order $2n$. If $F(G)=n-1$ or $n-2$,  then Conjecture \ref{conj} holds.
\end{prop}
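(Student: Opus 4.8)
The plan is to treat the two values of $F(G)$ separately. \textbf{When $F(G)=n-1$}, Theorem~\ref{p7} with $k=n-1$ already yields $e(G)\ge \frac{n(n+1)}{n-(n-1)}-(n-1)-1=n^2=\frac{n^2}{n-F(G)}$, so Conjecture~\ref{conj} holds (with equality, e.g., for $K_{n,n}$) and nothing more is needed. \textbf{When $F(G)=n-2$}, I would argue by strong induction on $n$ that $e(G)\ge \frac{n^2}{2}$. If $n-2\le n/2$, i.e. $n\le 4$, this is already contained in the previous proposition, so one may assume $n\ge 5$. Choose a perfect matching $M$ with $f(G,M)=n-2$. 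By Lemma~\ref{c2} (here $\frac{2n}{n-k}=\frac{2n}{2}=n$) there is an edge $uv\in M$ with $d_G(u)+d_G(v)\ge n$; since equality there forces $2\mid n$, in fact $d_G(u)+d_G(v)\ge n+1$ whenever $n$ is odd.

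Next, set $G'=G-\{u,v\}$, of order $2(n-1)$. Exactly as in the proof of Theorem~\ref{p7} one gets $f(G',M\setminus\{uv\})\ge n-3$, while trivially $F(G')\le n-2$, so $F(G')\in\{n-3,n-2\}$, and in either case $e(G)=e(G')+d_G(u)+d_G(v)-1$. If $F(G')=(n-1)-1$, the case $F=n-1$ applied to $G'$ gives $e(G')\ge (n-1)^2$, hence $e(G)\ge (n-1)^2+n-1=n^2-n\ge \frac{n^2}{2}$ for $n\ge 2$. If $F(G')=(n-1)-2$, the induction hypothesis (or, when $n=5$, the previous proposition) gives $e(G')\ge \frac{(n-1)^2}{2}$, and since $e(G')\in\mathbb{Z}$ one may round up and split on parity: for $n$ even $(n-1)^2$ is odd, so $e(G')\ge \frac{(n-1)^2+1}{2}$ and $e(G)\ge \frac{(n-1)^2+1}{2}+n-1=\frac{n^2}{2}$; for $n$ odd, $d_G(u)+d_G(v)\ge n+1$ gives $e(G)\ge \frac{(n-1)^2}{2}+n=\frac{n^2+1}{2}\ge \frac{n^2}{2}$. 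This completes the induction.

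The reduction $F(G')\ge F(G)-1$ and the identity $e(G)=e(G')+d_G(u)+d_G(v)-1$ are lifted verbatim from the proof of Theorem~\ref{p7} and cost nothing. The \emph{only} delicate point — and the only place the inequality is tight — is the sub-case $F(G')=n-3$, where the crude estimate $e(G)\ge \frac{(n-1)^2}{2}+n-1=\frac{n^2-1}{2}$ falls half an edge short of $\frac{n^2}{2}$ and is rescued precisely by the divisibility remark in Lemma~\ref{c2} (for odd $n$) together with the integrality of $e(G')$ (for even $n$). I expect this parity bookkeeping, rather than any conceptual difficulty, to be the main obstacle.
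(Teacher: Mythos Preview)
Your proof is correct and essentially identical to the paper's: both treat $F(G)=n-1$ via Theorem~\ref{p7}, then run induction on $n$ for $F(G)=n-2$, delete an edge $uv$ from Lemma~\ref{c2}, split on $F(G')\in\{n-2,n-3\}$, and in the latter sub-case use the same parity bookkeeping (divisibility from Lemma~\ref{c2} for odd $n$, integrality of $e(G')$ for even $n$). The only cosmetic difference is the base case: the paper checks $n=2$ by hand, whereas you appeal to the preceding proposition for $n\le 4$.
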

\begin{proof}For $F(G)=n-1$, two bounds in Conjecture \ref{conj} and Theorem \ref{p7} are equal. So Conjecture \ref{conj} holds.

For $F(G)=n-2$, we will proceed by induction on $n$.
For $n=2$, we have $F(G)=0$. So $G$ has a unique perfect matching and $e(G)\geq 2$. Suppose that $n\geq 3$. Since $F(G)=n-2$, there exists a perfect matching $M$ of $G$ such that $f(G,M)=n-2$. By Lemma \ref{c2}, there exists an edge $uv\in M$ such that $d_G(u)+d_G(v)\geq n$. Let $G'=G-\{u,v\}$. Then $n-3\leq F(G')\leq n-2$.

If $F(G')=n-2=(n-1)-1$, then $e(G')\geq(n-1)^2$ by Theorem \ref{p7}. So $$e(G)=e(G')+d_G(u)+d_G(v)-1\geq(n-1)^2+n-1\geq
\frac{n^2}{2}.$$ Otherwise, we obtain that $F(G')=n-3=(n-1)-2$.  By the induction hypothesis, $e(G')\geq \lceil\frac{(n-1)^2}{n-1-(n-3)}\rceil=\lceil\frac{(n-1)^2}{2}\rceil$.
By Lemma \ref{c2}, we obtain that $d_G(u)+d_G(v)\geq n+1$ when $n$ is odd and $d_G(u)+d_G(v)\geq n$ when $n$ is even. Hence we have
\begin{equation*}
 e(G)=e(G')+d_G(u)+d_G(v)-1\geq
 \begin{cases}
 \lceil\frac{(n-1)^2}{2}\rceil+
(n+1)-1\geq \frac{n^2+1}{2}, & \quad {\text{if $n$ is odd}};\\
 \lceil\frac{(n-1)^2}{2}\rceil+n-1=\frac{n^2}{2},&\quad {\text{otherwise}}.
 \end{cases}
 \end{equation*}
Here we complete the proof.
\end{proof}
In Theorem \ref{thm4}, we have completely characterized all bipartite graphs $G$ of order $2n$ and with  $f(G)= n-2$. Here we propose the following problem.
\begin{pb}Determine all non-bipartite graphs $G$ of order $2n$ and with $f(G)=n-2$.
\end{pb}

For general 2-connected plane bipartite graphs, Abeledo and Atkinson \cite{13} obtained that the resonant number can be computed in polynomial time.
Hence the maximum forcing numbers of hexagonal systems \cite {27}, polyomino graphs \cite{42} and BN-fullerene graphs \cite{40} can be computed in polynomial time.
\vspace{2mm}

Afshani \cite{5} proposed a problem which has not been solved yet.
\begin{pb}\cite{5} What is the computational complexity of the maximum forcing numbers of graphs ?
\end{pb}



\end{document}